\newtheorem{theorem}{Theorem}
\newtheorem{corollary}[theorem]{Corollary}
\newtheorem{lemma}[theorem]{Lemma}
\newtheorem{proposition}[theorem]{Proposition}
\newtheorem{algorithm}{Algorithm}
\newtheorem{definition}[theorem]{Definition}
\newtheorem{question}[theorem]{Question}
\newtheorem{example}[theorem]{Example}
\numberwithin{equation}{section}
\def\R{\mathbb{R}}
\def\C{\mathbb{C}}
\def\N{\mathbb{N}}
\def\Z{\mathbb{Z}}
\def\Q{\mathbb{Q}}
\def\al{\alpha}
\def\ll{\mathcal{L}}
\def\nn{\mathcal{N}}
\def\bb{\mathcal{B}}
\def\ff{\mathcal{F}}
\def\mm{\mathcal{M}}
\def\cc{\mathcal{C}}
\def\dd{\mathcal{D}}
\def\uu{\mathcal{U}}
\def\Rem{\mathcal{R}}
\def\G{\mathcal{G}}
\def\V{\mathcal{V}}
\def\E{\mathcal{E}}
\newcommand{\abs}[1]{\left|#1\right|}
\newcommand{\sm}[2]{#1\!\setminus\!#2}
\begin{document}

\title[On Littlewood and Newman multiples of Borwein polynomials]{On Littlewood and Newman multiples of Borwein polynomials}
\author{P. Drungilas, J. Jankauskas, J. \v Siurys}
\address{Department of Mathematics and Informatics, Vilnius
University, Naugarduko 24, Vilnius LT-03225, Lithuania}
\email{pdrungilas@gmail.com}
\address{Mathematik und Statistik, Montanuniversit\"at Leoben, Franz Josef Stra\ss{}e 18, A-8700 Leoben, Austria}
\email{jonas.jankauskas@gmail.com}
\address{Department of Mathematics and Informatics, Vilnius
University, Naugarduko 24, Vilnius LT-03225, Lithuania}
\email{jonas.siurys@mif.vu.lt}

\thanks{The first author is supported by  the Research Council of Lithuania grant MIP-049/2014. The second author is supported by project P27050 \emph{Fractals and Words: Topological, Dynamical, and Combinatorial Aspects} funded by the Austrian Science Fund (FWF)}

\subjclass[2010]{11R09, 11Y16, 12D05} \keywords{Borwein polynomial, Littlewood polynomial, Newman polynomial, Pisot number, Salem Number, 
Mahler measure, polynomials of small height}

\begin{abstract}
A Newman polynomial has all the coefficients in $\{ 0,1\}$ and constant term 1, whereas a Littlewood polynomial 
has all coefficients in $\{-1,1\}$. We call $P(X)\in\Z[X]$ a \emph{Borwein} polynomial if all its coefficients belong to $\{ -1,0,1\}$ and $P(0)\neq 0$. 
By exploiting an algorithm which decides whether a given monic integer polynomial with no roots on the unit circle $|z|=1$ has a non-zero multiple in $\Z[X]$ with coefficients in a finite set $\dd \subset \Z$,  for every 
Borwein polynomial of degree at most 9 we determine whether it divides any Littlewood or Newman polynomial. In particular, we show that every Borwein polynomial of degree at most 8 which divides some Newman polynomial  divides some Littlewood polynomial as well. In addition to this, for every Newman polynomial of degree at most 11, we check whether it has a Littlewood multiple, extending the previous results of Borwein, Hare, Mossinghoff, Dubickas and Jankauskas.
\end{abstract}

\maketitle

\section{Introduction}

Let $d \in \N$ and let $P(X)$ be a polynomial
\begin{equation}\label{px}
P(X) = a_d X^d + a_{d-1}X^{d-1}+\dots+a_1X+a_0
\end{equation}
in one variable $X$ with integer coefficients $a_j \in \Z$. To avoid trivialities,  we consider only polynomials with non-zero leading and constant terms $a_d \cdot a_0 \ne 0$. In such case, both $P(X)$ and it's reciprocal polynomial $P^{*}(X) := X^dP(1/X)$ are of the same degree $d$. If $P(X)$ has only three non-zero coefficients $a_j$, for $0 \leq j \leq d$, then it is called \emph{a trinomial}. Similarly, if the number of non-zero coefficients is four, $P(X)$ is called \emph{a quadrinomial}.

We call the polynomial $P(X)$ in \eqref{px} a \emph{Littlewood} polynomial, if $a_j \in \{-1, 1\}$ for each $0 \leq j \leq d$. For instance,  $P(X) = X^4+X^3-X^2+X-1$  is a Littlewood polynomial. The set of all Littlewood polynomials is denoted by $\ll$.

Similarly, a polynomial $P(X)$ is called \emph{a Newman polynomial}, if all coefficients $a_j \in \{0, 1\}$ and $P(0)=1$. For instance, $P(X) = X^3+X+1$ is a Newman polynomial. The subset of $\Z[X]$ of all Newman polynomials is denoted by $\nn$.

Finally,  an integer polynomial $P(X)$ in \eqref{px} with all coefficients $a_j \in \{-1, 0, 1\}$ and a nonzero constant term 
$P(0)$ is called \emph{a Borwein polynomial}\footnote{This notation in honor of P. Borwein for his work on polynomials of this type was proposed by C. Smyth during the 2015 workshop \emph{The Geometry, Algebra and Analysis of Algebraic numbers} in Banff, Alberta (personal communication).}.  $P(X) = X^5-X^2+1$  is an example of a Borwein polynomial. The set of all Borwein polynomials is denoted by $\bb$.  One has trivial set relations $\nn \subset \bb$, $\ll \subset \bb$. 

We say that a polynomial $P(X)$ has a Littlewood multiple, if it divides some polynomial in the set $\ll$. In the similar way, we say that $P(X)$ has a Newman multiple, or a Borwein multiple, if $P(X)$ divides some polynomial in $\nn$  or in $\bb$, respectively.

When we need to restrict our attention only to polynomials of fixed degree, we use the subscript $d$ in $\nn_d$, $\ll_d$ and $\bb_d$ to denote the sets of Newman, Littlewood and Borwein polynomials of degree $d$, respectively. Similarly, we use subscript ``$\leq d$'' to indicate the sets of polynomials of degree \emph{at most} $d$, that is
\[
\nn_{\leq d} = \bigcup_{j=0}^d \nn_j, \qquad \ll_{\leq d} = \bigcup_{j=0}^d \ll_j, \qquad \bb_{\leq d} = \bigcup_{j=0}^d \bb_j.
\]

Clearly, non-constant polynomials $P(X)$ with all non-negative coefficients 
 cannot have any positive real zeros $X \in [0, \infty)$. Newman 
  polynomials are among such polynomials. To denote the subsets of 
   Littlewood or Borwein polynomials with no real positive zeros, we append 
    the $"-``$ superscript, for instance,  $\ll^-$, $\bb^-$, $\ll_d^-$,  $\bb_d^-$ and $\ll_{\leq d}^-$,  $\bb_{\leq d}^-$.  

Let $\mathcal{A} \subset \Z[X]$. We will employ the notation $\ll(\mathcal{A})$ to denote the set of 
polynomials $P(X) \in \mathcal{A}$ which divide some Littlewood polynomial. Similarly, denote by 
$\nn(\mathcal{A})$ the set of polynomials $P(X) \in \mathcal{A}$ which divide some Newman polynomial. 
In particular, the set $\bb_d\!\setminus\!\ll(\bb)$ consists of those Borwein polynomials of degree $d$ 
that do not divide any Littlewood polynomial, whereas the set $\nn(\bb_d)\!\setminus\!\ll(\bb)$ consists of those  
Borwein polynomials of degree $d$ that divide some Newman polynomial and do not divide any Littlewood polynomial.

Let $\dd\subset\Z$ be a finite set. We call $\dd$ a \emph{digit set}. Central to our work is a further development  (see Section~\ref{sal}) of an algorithm that can answer the following question.

\begin{question}\label{p136}
Given a monic polynomial $P \in \Z[X]$ which has no roots on the unit circle $|z|=1$ in the complex plane, does there 
exist a nonzero polynomial with coefficients in $\dd$ which is divisible by $P$?
\end{question}

The first instance of such an algorithm that we are aware of appeared in the work  of Lau \cite{ksl}. It was specialized to the case when $P(X)$ is a minimal polynomial of a Pisot number. Subsequent computations were done by Borwein and Hare \cite{BH}, Hare and Mossinghoff \cite{HM}. It was used for the computations of the discrete spectra of Pisot numbers. In a special case where the set $\dd = \{-q, \dots, -1, 0, 1, \dots, q\}$, here $q$ is a positive integer, the fact the $P(X)$ has a non-zero multiple $Q(X)$ with coefficients in $\dd$ is equivalent to the fact that the number $0$ has a non-trivial representation in the difference set of the spectra generated by the root $\al$ of $P(X)$ with digits $\{0, 1, \dots, q\}$. Stankov \cite{dst} extended the algorithm to non--Pisot algebraic integers with no conjugates on the unit circle. Akiyama, Thuswaldner and Zaimi \cite{ATZ} show that there exists a finite automaton that can determine the minimal height polynomial with integer coefficients for a given algebraic number provided it has no algebraic conjugates on the unit circle $|z|=1$ in the complex plane.

Thus the previously existing version of this algorithm answers Question~\ref{p136} for irreducible monic polynomials $P(X) \in \Z[X]$ with no roots on the unit circle. One  contribution of our paper is a further development of this algorithm to allow $P(X)$ to have repeated roots (i.e. when $P(X)$ is not separable). This should open the way to answer some problems regarding the multiplicity of the divisors of polynomials with restricted coefficients (see, e.g., Example~\ref{exmr} in Section~\ref{calc}). The condition that $P(X)$ has no roots with $|z|=1$, cannot be dropped, as it is essential to the proof that search terminates, but in some cases these restrictions can be circumvented (see Subsection \ref{nocycl} on cyclotomic factors and the last note at the end of Section \ref{sal}).

We must mention other approaches to search for Newman and Littlewood multiples of $P(X)$ that appear in the literature: Borwein and Hare \cite{BH} made applications of the LLL algorithm to this problem, Mossinghoff \cite{Mos} considered the factorization of Littlewood polynomials of large degrees, Dubickas and Jankauskas \cite{DJ} performed the search for the multipliers of bounded height. However, these heuristic approaches do not allow to identify $P(X)$ that have no such multiple.

We implement our algorithm to answer this question for all Borwein polynomials of degree up to 9 and the digit sets 
$\dd = \{0,1\}$ and $\dd=\{-1,1\}$. In other words, for every Borwein polynomial of degree at most 9 we decide whether 
it has a Littlewood multiple and whether it divides some Newman polynomial. Moreover, for every Newman polynomial 
$P(X)$ of degree at most $11$ we determine whether $P(X)\in\ll(\nn)$. These computations allow us to extend the results 
previously obtained by Dubickas and Jankauskas \cite{DJ}, Borwein and Hare \cite{BH}, Hare and Mossinghoff \cite{HM} (see Section~\ref{rbss} and  Section~\ref{calc}).

This paper is organized as follows. The main results are given in Section~\ref{rbss}. In Section~\ref{calc} we describe our computations. The algorithm along with the proofs of auxiliary results are given in Section~\ref{sal}.

\section{Main results}\label{rbss}

\subsection{Relations between sets $\bb$,  $\ll(\bb)$ and $\nn(\bb)$}\label{rbss}

The set $\bb$ of Borwein polynomials can be decomposed into the following four pairwise disjoint subsets (see Figure~\ref{F}):\\[0,1cm]
\begin{tabularx}{\linewidth}{ r X }
 $\ll(\bb)\!\setminus \!\nn(\bb)$ &-- the set of Borwein polynomials that have Littlewood 
 multiples and don't have Newman multiples;\\[0,1cm]
$\nn(\bb)\!\setminus \!\ll(\bb)$ &-- the set of Borwein polynomials that have Newman 
 multiples and don't have Littlewood multiples;\\[0,1cm]
$\ll(\bb)\cap \nn(\bb)$ &-- the set of Borwein polynomials that have Littlewood and 
  Newman multiples;\\[0,1cm]
$\bb\!\setminus\!\Big( \ll(\bb)\cup \nn(\bb)\Big)$ &-- the set of Borwein polynomials that 
 divide no Littlewood and no Newman polynomial.
 \end{tabularx}
 \vspace{0,2cm}
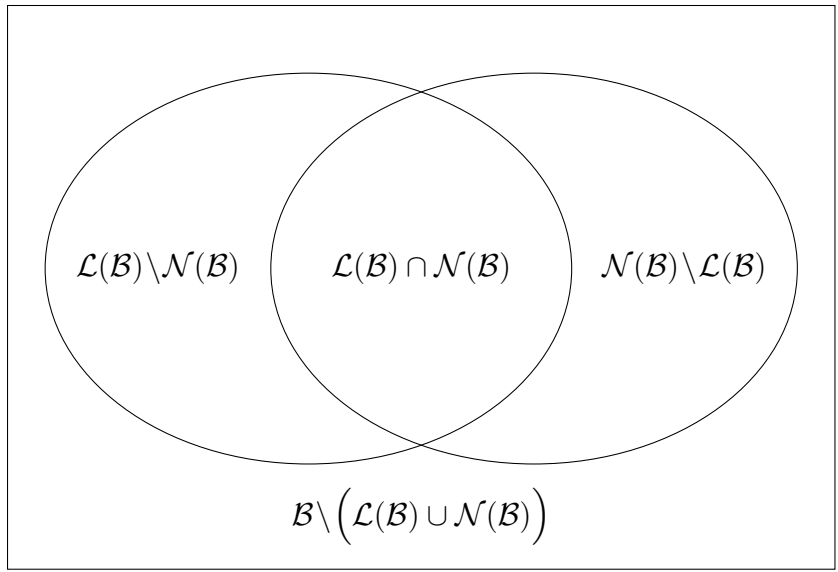
\begin{figure}[h]
\centering
\begin{tikzpicture}
\draw (-5.5,-4) rectangle (5.5,3.5);
\draw (-1.5,0) ellipse (3.5cm and 2.6cm); 
\draw (1.5,0) ellipse (3.5cm and 2.6cm);
\draw (-3.5,0) node {$\ll(\bb)\!\setminus \!\nn(\bb)$};
\draw (3.5,0) node {$\nn(\bb)\!\setminus \!\ll(\bb)$};
\draw (0,0) node {$\ll(\bb)\cap \nn(\bb)$};
\draw (0,-3.3) node {$\bb\!\setminus\!\Big( \ll(\bb)\cup \nn(\bb)\Big)$};
\end{tikzpicture}
\caption {Decomposition of the set of Borwein polynomials.}\label{F}
\end{figure}
 
The rectangle in Figure~\ref{F} corresponds to 
the set $\bb$ while the left and the right hand side ellipses correspond to the sets $\ll(\bb)$ and 
$\nn(\bb)$, respectively.

We implemented Algorithm~\ref{bfs} (see Section~\ref{calc}) and ran it to verify the statements $P(X)\in\ll(\bb)$ and $P(X)\in\nn(\bb)$ 
for all Borwein polynomials $P(X)$ of degree at most 9. Thus we have completed the classification of polynomials from $\bb_{\leq 9}$ 
started by Dubickas and Jankauskas \cite{DJ}. In particular, we calculated the numbers 
\[
\# \big(\ll(\bb_d)\!\setminus\!\nn(\bb)\big), \;\;\# \big(\nn(\bb_d)\!\setminus\!\ll(\bb)\big)\; \text{ and }
\; \# \big(\ll(\bb_d)\cap\nn(\bb_d)\big) 
\]
for every $d\in\{1,2,\dotsc,9\}$ that are provided in Table~\ref{LneNneLirN}. 
As a result we obtain the following statement (see the third column in Table~\ref{LneNneLirN}).
\begin{theorem}\label{B8}
Every Borwein polynomial of degree at most $8$ which divides some Newman polynomial 
divides some Littlewood polynomial as well.
\end{theorem}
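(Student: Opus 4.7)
The plan is to establish this theorem by exhaustive computational verification, since the set of Borwein polynomials of bounded degree is finite. Concretely, I would enumerate all $P(X) \in \bb_{\leq 8}$ (there are fewer than $3^{8}\cdot 2$ of them, easily tractable), and for each one decide membership in $\nn(\bb)$ and in $\ll(\bb)$ by invoking Algorithm~\ref{bfs} of Section~\ref{calc} with digit sets $\dd=\{0,1\}$ and $\dd=\{-1,1\}$ respectively. The theorem is then the assertion that in the resulting table the column $\nn(\bb_d)\!\setminus\!\ll(\bb)$ is empty for every $d\leq 8$; this is exactly what is recorded in Table~\ref{LneNneLirN}.

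Before calling the algorithm, I would reduce each candidate $P(X)$ to a form on which the algorithm is guaranteed to terminate. Question~\ref{p136} requires $P(X)$ to be monic with no roots on the unit circle. Accordingly, I would first factor $P(X)$ over $\Z[X]$ and strip off any cyclotomic factors (and any other unit-circle roots, although the latter cannot arise in small degree for Borwein polynomials other than through cyclotomics, by Kronecker's theorem). The cyclotomic part is handled by the ancillary argument described in Subsection~\ref{nocycl}, which characterises when a cyclotomic polynomial divides a Littlewood or a Newman polynomial; the non-cyclotomic, monic complement is fed into Algorithm~\ref{bfs}. Since divisibility by $P(X)$ is equivalent to simultaneous divisibility by each of its coprime factors, and since the digit sets $\{-1,0,1\}$ and $\{0,1\}$ are closed under the relevant combinatorial operations, this decomposition suffices to decide the two memberships.

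With these inputs prepared, I would run the algorithm twice for every $P(X)\in\bb_{\leq 8}$: once with $\dd=\{0,1\}$ to test $P\in\nn(\bb)$, once with $\dd=\{-1,1\}$ to test $P\in\ll(\bb)$. Then I would cross-tabulate the two Boolean outcomes and inspect the cell corresponding to ``Newman multiple exists, Littlewood multiple does not''. To conclude the theorem, I must verify that this cell is empty in every degree $d\in\{1,\dotsc,8\}$, which is the content of the third column of Table~\ref{LneNneLirN}. As a sanity check, I would reconcile the counts against the partial classifications already obtained in~\cite{DJ},~\cite{BH},~\cite{HM} in the overlapping degree ranges.

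The main obstacle is not the combinatorial enumeration itself but the termination and runtime of Algorithm~\ref{bfs} on those $P(X)$ whose non-cyclotomic part has roots very close to the unit circle (Salem-like behaviour), where the state graph built by the breadth-first search can become large. Controlling this — via good bounds on the search depth derived from the minimal modulus of the roots away from $|z|=1$, and via the extension of the algorithm to non-separable $P(X)$ announced in the introduction — is the technical heart of the verification, and is exactly what Section~\ref{sal} is designed to support.
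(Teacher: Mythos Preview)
Your overall strategy matches the paper's: the theorem is indeed obtained by running Algorithm~\ref{bfs} on every $P\in\bb_{\leq 8}$ with $\dd=\{0,1\}$ and $\dd=\{-1,1\}$, stripping cyclotomic factors via Proposition~\ref{ocd}, and reading off the zeros in the third column of Table~\ref{LneNneLirN}.

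There is, however, a genuine gap in your reduction step. You write that unit-circle roots other than roots of unity ``cannot arise in small degree for Borwein polynomials \dots\ by Kronecker's theorem''. This is false: Kronecker's theorem only applies when \emph{all} conjugates lie on $|z|=1$, whereas Salem polynomials (and their complex and sporadic analogues) have some conjugates on the circle and others off it. Such irreducible non-cyclotomic factors with unimodular roots already occur among Borwein polynomials of degrees $4$, $6$ and $8$; see Tables~\ref{MPSN1}--\ref{SBPUNR} and the 376 polynomials mentioned in Section~\ref{calc}. For these $P$ you cannot ``strip off'' the unimodular roots, because they sit inside an irreducible factor together with non-unimodular roots, and the termination proof of Algorithm~\ref{bfs} (Lemma~\ref{fin}) does not apply. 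The paper handles this by the device in the Note after Algorithm~\ref{al1}: omit the unimodular roots from the inequalities~\eqref{eq:237con} and verify empirically that the resulting graph $\G(P,\dd)$ is still finite. Your plan needs this ingredient; without it the verification would stall on, e.g., $X^4-X^3-X^2-X+1$.

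A minor point: the algorithm uses depth-first search, not breadth-first, and when the noncyclotomic part has two irreducible factors it is not enough to test them separately---you must run the algorithm on their product (the paper uses the per-factor runs only as a preliminary filter).
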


Theorem~\ref{B8} is a generalization of Theorem~2 in \cite{DJ} where it is proved that every Newman 
polynomial of degree at most $8$ divides some Littlewood polynomial.

\begin{table}[h]
 \caption {}\label{LneNneLirN}
\begin{tabular}{rccc}
\toprule
 $d$ & $\# \big(\ll(\bb_d)\!\setminus\!\nn(\bb)\big)$ & $\# \big(\nn(\bb_d)\!\setminus\!\ll(\bb)\big)$ & 
 $\# \big(\ll(\bb_d)\cap\nn(\bb_d)\big)$ \\
\midrule
$ 1 $ & $ 2 $ & $ 0 $ & $ 2 $ \\
$ 2 $ & $ 6 $ & $ 0 $ & $ 6 $ \\
$ 3 $ & $ 24 $ & $ 0 $ & $ 12 $ \\
$ 4 $ & $ 72 $ & $ 0 $ & $ 32 $ \\
$ 5 $ & $ 224 $ & $ 0 $ & $ 68 $ \\
$ 6 $ & $ 612 $ & $ 0 $ & $ 164 $ \\
$ 7 $ & $ 1518 $ & $ 0 $ & $ 342 $ \\
$ 8 $ & $ 3610 $ & $ 0 $ & $ 822 $ \\
$ 9 $ & $ 8564 $ & $ 60 $ & $ 1596 $ \\
\bottomrule
 \end{tabular}
 \end{table} 

From Figure~\ref{F}, by the inclusion-exclusion principle, one obtains the following equalities
\begin{align*}
&\# \bb_d\!\setminus\!\ll(\bb) = \#\bb_d - \# \big(\ll(\bb_d)\!\setminus\!\nn(\bb)\big) - 
\# \big(\ll(\bb_d)\cap\nn(\bb_d)\big),\\
&\# \bb_d\!\setminus\!\nn(\bb) = \#\bb_d - \# \big(\nn(\bb_d)\!\setminus\!\ll(\bb)\big) - 
\# \big(\ll(\bb_d)\cap\nn(\bb_d)\big),\\
&\# \bb_d\!\setminus\!(\ll(\bb)\cup\nn(\bb)) = \#\bb_d  - \# \big(\ll(\bb_d)\!\setminus\!\nn(\bb)\big)\\   
& \hspace{4,7cm}-  \# \big(\nn(\bb_d)\!\setminus\!\ll(\bb)\big) - 
\# \big(\ll(\bb_d)\cap\nn(\bb_d)\big),
\end{align*}
which are valid for all positive integers $d$. These numbers, for $d\in\{1,2,\dotsc,9\}$, are given in Table~\ref{DDI}.

\begin{table}[h]
 \caption {}\label{DDI}
\begin{tabular}{rccc}
\toprule
 $d$ & $\# \big(\bb_d\!\setminus\!\ll(\bb)\big)$ & $\# \big(\bb_d\!\setminus\!\nn(\bb)\big)$ & 
 $\# \Big(\bb_d\!\setminus\!(\ll(\bb)\cup\nn(\bb))\Big)$  \\
\midrule
$ 1 $ & $ 0 $ & $ 2 $ & $ 0 $ \\
$ 2 $ & $ 0 $ & $ 6 $ & $ 0 $ \\
$ 3 $ & $ 0 $ & $ 24 $ & $ 0 $ \\
$ 4 $ & $ 4 $ & $ 76 $ & $ 4 $ \\
$ 5 $ & $ 32 $ & $ 256 $ & $ 32 $ \\
$ 6 $ & $ 196 $ & $ 808 $ & $ 196 $ \\
$ 7 $ & $ 1056 $ & $ 2574 $ & $ 1056 $ \\
$ 8 $ & $ 4316 $ & $ 7926 $ & $ 4316 $ \\
$ 9 $ & $ 16084 $ & $ 24588 $ & $ 16024 $ \\
\bottomrule
 \end{tabular}
 \end{table}

For example, there are exactly $196$ Borwein polynomials of degree 6 which have no Littlewood multiple.

\subsection{Borwein polynomials that do not divide any Littlewood polynomial} 

Recall that a real algebraic integer $\al > 1$ is called a \emph{Pisot number} after \cite{Pi}, if all the algebraic conjugates of $\al$ over $\Q$ (other than $\al$ itself) are of modulus $|z|<1$. Similarly, a real algebraic integer $\al > 1$ is called a \emph{Salem number} (see, e.g., \cite{Sa1, Sa2, Sa3}), if all other conjugates of $\al$ lie in the unit circle $|z| \leq 1$ with at least one conjugate on the unit circle $|z|=1$. 

In their computation of the discrete spectra of Pisot numbers, Borwein and Hare \cite{BH} found first 
examples of Borwein polynomials $P(X)$ that divide no Littlewood polynomial. All these polynomials are of 
degree $d=9$ or $d=10$ and they are minimal polynomials of Pisot numbers, see Table \ref{PnotL}. 
So the sets $\bb_9\!\setminus\!\ll(\bb)$ and 
$\bb_{10}\!\setminus\!\ll(\bb)$ are non-empty. 

\begin{table}[h]
 \caption {Minimal polynomials of Pisot numbers 
 that divide no Littlewood polynomial found by Borwein and Hare.}\label{PnotL}
\begin{tabular}{clc}
\toprule
\# & Polynomial $P(X) \in \bb$ & Pisot number\\
\midrule
1 & $X^{10}-X^8-X^7-X^6-X^5+1$ & $1.954062236\ldots$\\
2 & $X^9-X^8-X^7-X^6-X^5-X^4+1$ & $1.963515789\ldots$\\
3 & $X^9-X^8-X^7-X^6-X^5-X^4-X^3-X-1$ & $1.992483962\ldots$\\
4 & $X^9-X^8-X^7-X^6-X^5-X^4-X^3-X^2-1$ & $1.994016415\ldots$\\
\bottomrule
 \end{tabular}
 \end{table} 

In the present work, we find the least degree Borwein polynomials with no Littlewood multiple.

\begin{proposition}
The smallest degree Borwein polynomial which does not divide any Littlewood polynomial is $p(X)=X^4 + X^3 -X + 1$. 
Moreover, 
\[
\bb_{\leq 4}\!\setminus\!\ll(\bb)=\{ \pm p(X), \pm p^*(X)\}.
\]
\end{proposition}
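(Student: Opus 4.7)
The proof reduces to a finite check supported by Algorithm~\ref{bfs}, whose correctness and termination are developed in Section~\ref{sal}.

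First, I would enumerate $\bb_{\leq 4}$, which contains $4+12+36+108=160$ polynomials. For every $P(X)\in\bb_{\leq 3}$ an explicit Littlewood multiple can be exhibited (for instance by running Algorithm~\ref{bfs} with $\dd=\{-1,1\}$ and reading off a successful branch), establishing $\bb_{\leq 3}\subset\ll(\bb)$.

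For $d=4$ I would exploit the three symmetries
\[
P(X)\mapsto -P(X),\qquad P(X)\mapsto P^*(X),\qquad P(X)\mapsto P(-X),
\]
each of which preserves the property of having a Littlewood multiple: if $L\in\ll$ and $P\mid L$, then $L$, $L^*$, and $L(-X)$ are Littlewood multiples of $-P$, $P^*$, and $P(-X)$ respectively. These maps generate an abelian group of order $8$ acting on $\bb_4$. For the polynomial $p(X)=X^4+X^3-X+1$ one checks that $p^*(X)=X^4-X^3+X+1=p(-X)$, so the orbit of $p$ under this group is exactly $\{\pm p,\pm p^*\}$. For every other orbit in $\bb_4$ I would produce a Littlewood multiple of some representative; this confirms that every element of $\bb_4\setminus\{\pm p,\pm p^*\}$ lies in $\ll(\bb)$.

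It remains to show that $p\notin\ll(\bb)$. To invoke Algorithm~\ref{bfs} one must first check that $p$ has no root on the unit circle. From $p(X)-p^*(X)=2X^3-2X=2X(X-1)(X+1)$ and $p(0)=1$, $p(\pm 1)=2\neq 0$ we get $\gcd(p,p^*)=1$. If $z$ were a root of $p$ with $|z|=1$, then $\bar z=1/z$, so $p^*(z)=z^4 p(1/z)=z^4\overline{p(z)}=0$, contradicting $\gcd(p,p^*)=1$. Hence the hypothesis of the algorithm holds, and running it with $\dd=\{-1,1\}$ terminates with a negative verdict, proving $p\notin\ll(\bb)$; the corresponding statements for $-p$, $p^*$ and $-p^*$ follow from the symmetries above.

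The main obstacle is this last step: its validity rests on the termination and correctness of Algorithm~\ref{bfs}, which is the content of Section~\ref{sal}. For a polynomial of degree $4$ the BFS state space is small enough that the negative verdict can be traced out explicitly as a sanity check.
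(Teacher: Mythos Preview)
Your proposal is correct and follows essentially the same approach as the paper: the proposition is established computationally by running Algorithm~\ref{bfs} on the (finitely many) Borwein polynomials of degree $\leq 4$, and the paper records the outcome in Table~\ref{DDI} (the column $\#(\bb_d\setminus\ll(\bb))$ shows $0,0,0,4$ for $d=1,2,3,4$). Your additional ingredients---the explicit use of the symmetries $P\mapsto -P$, $P\mapsto P^*$, $P\mapsto P(-X)$ to cut down the casework, and the direct verification via $\gcd(p,p^*)=1$ that $p$ has no unimodular roots so that the algorithm applies without the workaround of Section~\ref{unimod}---are not in the paper but are sound refinements of the same computational strategy.
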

 
A systematic investigation of the sets $\ll(\bb) \cap \nn(\bb)$ and $\nn(\bb) \!\setminus\! \ll(\bb)$ 
was started by Dubickas and Jankauskas in \cite{DJ}. They found that each $P(X) \in \nn_{\leq 8}$ 
 has a Littlewood multiple, so that $\ll(\nn_{\leq 8})=\nn_{\leq 8}$. 
First known polynomials $P(X) \in \nn_9$ that do not divide any polynomial in $\ll$ were 
also identified in \cite{DJ}. They are equal to one of the polynomials no. 1, 3, 5, 9 of Table~\ref{NpndLp} or their reciprocals.  Moreover, all the possible candidates  of $P(X) \in \nn_9$ with no Littlewood multiple were identified (see Table~7 in \cite{DJ}) but not fully resolved. 

\begin{table}[h]
 \caption {The complete set $\nn_{9}\!\setminus\! \ll(\nn)$ (reciprocals omitted).}\label{NpndLp}
\begin{tabular}{cl}
\toprule
\# & Polynomial $P(X)$\\
\midrule
1& $ X^{9} + X^{6} + X^{2} + X + 1 $ \\
2& $ X^{9} + X^{7} + X^{6} + X^{2} + 1 $ \\
3& $ X^{9} + X^{7} + X^{6} + X^{4} + 1 $ \\
4& $ X^{9} + X^{8} + X^{6} + X^{5} + X^{2} + 1 $ \\
5& $ X^{9} + X^{8} + X^{7} + X^{5} + X^{3} + 1 $ \\
6& $ X^{9} + X^{8} + X^{7} + X^{5} + X^{2} + X + 1 $ \\
7& $ X^{9} + X^{8} + X^{5} + X^{3} + X^{2} + X + 1 $ \\
8& $ X^{9} + X^{7} + X^{6} + X^{3} + X^{2} + X + 1 $ \\
9& $ X^{9} + X^{8} + X^{5} + X^{4} + X^{3} + X^{2} + 1 $ \\
\bottomrule
 \end{tabular}
 \end{table}

Our recent computations confirm that none of these candidates divide any Littlewood polynomial. They are listed as polynomials no. 2, 4, 6, 7, 8  (or their reciprocals) in Table~\ref{NpndLp}. Hence, the sets $\ll(\nn_9)$ and $\nn_9\! \setminus\! \ll(\nn)$ are now completely determined. In particular, $\#\nn_{9}\!\setminus\! \ll(\nn) = 18$. The complete list of Newman polynomials of degree $9$ that have no Littlewood multiple is provided in Table~\ref{NpndLp} (with reciprocals omitted) .   

 In the present work, we have been able to extend the classification of the polynomials from the set $\nn_9$ to larger degrees. As these tables are longer and its not practical to provide them here, we just indicate that the sets $\ll(\nn_d)$,  $\nn_d\!\setminus\!\ll(\nn)$ have been completely determined for $d=10$ and $d=11$. In particular, our computations show that $\#\nn_{10}\!\setminus\! \ll(\nn) = 36$ and $\#\nn_{11}\!\setminus\! \ll(\nn) = 174$.

  Using the polynomial $P(X)$ no.3 from Table \ref{NpndLp}, Dubickas and Jankauskas \cite{DJ}
  proved that for all sufficiently large positive integers $n$ the polynomial 
  $x^n P(X) + 1$ does not divide any Littlewood polynomial. This implies that the set 
  $\nn_d\!\setminus\! \ll(\nn)$ is non-empty for all sufficiently large $d$. 
 In addition to this, they proved that every Borwein polynomial with three non-zero terms
 \[
 X^{b} \pm X^{a} \pm 1, \quad 1 \leq a < b, \quad a, b \in\Z
 \] (including Newman trinomials $X^b+X^a+1$)  has a Littlewood multiple, as well as some types of Borwein quadrinomials $X^c\pm X^b\pm X^a\pm1$ do. These results show that set  $\ll(\bb) \cap \nn(\bb)$ has a non-trivial structure. 
 
 Dubickas and Jankauskas \cite{DJ} asked whether there exists a Borwein quadrinomial that does not 
 divide any Littlewood polynomial. Our computations imply that there are exactly 20 such quadrinomials of 
 degree $\leq 9$. 
They are given in Table~\ref{BQndL} (we only list quadrinomials with positive leading coefficient).

\begin{table}[h!]
 \caption{Monic quadrinomials  in $\bb_{\leq 9}\! \setminus\! \ll(\bb)$.}\label{BQndL}
\begin{tabular}{l}
\toprule
$ X^{4} + X^{3} - X + 1 $ \\
$ X^{4} - X^{3} + X + 1 $ \\
$ X^{6} - X^{5} - X - 1 $ \\
$X^{6} + X^{5} + X - 1 $ \\
$ X^{8} - X^{5} + X^{3} + 1 $ \\
$ X^{8} + X^{5} - X^{3} + 1 $ \\
$ X^{8} + X^{7} - X + 1 $ \\
$ X^{8} - X^{7} + X + 1 $ \\
$ X^{8} + X^{6} - X^{2} + 1 $ \\
$ X^{8} - X^{6} + X^{2} + 1 $ \\
\bottomrule
 \end{tabular}
 \end{table}

Since every Borwein trinomial divides some Littlewood polynomial (see \cite[Theorem~1]{DJ}),  
we have the following result (see also Table~\ref{BQndL}).
\begin{corollary}
Number $4$ is the lest positive integer $k$ for which there exists a Borwein polynomial with $k$ nonzero 
terms that divides no Littlewood polynomial.
\end{corollary}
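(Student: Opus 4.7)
The plan is to prove the statement in two parts: an upper bound exhibiting a $4$-term Borwein polynomial with no Littlewood multiple, and a lower bound showing that every Borwein polynomial with at most three nonzero terms divides some Littlewood polynomial. The upper bound is a direct appeal to Table~\ref{BQndL}: the quadrinomial $X^4+X^3-X+1$ has exactly four nonzero coefficients, and by the algorithmic computation described in Section~\ref{calc} it divides no Littlewood polynomial. This settles that $k=4$ is attainable.

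For the lower bound I would argue by cases on the number of nonzero terms. A Borwein polynomial with a single nonzero term must be $\pm 1$ (since $P(0)\neq 0$ is required), which trivially divides every Littlewood polynomial. A Borwein polynomial with exactly two nonzero terms is, up to an overall sign, of the form $X^n\pm 1$ for some $n\ge 1$; the elementary identities
\[
(X^n+1)(1+X+\cdots+X^{n-1}) = 1+X+\cdots+X^{2n-1}
\]
and
\[
(X^n-1)(1+X+\cdots+X^{n-1}) = -1-X-\cdots-X^{n-1}+X^n+X^{n+1}+\cdots+X^{2n-1}
\]
show that both types have Littlewood multiples. The case of exactly three nonzero terms is precisely the content of Theorem~1 of \cite{DJ}, which is quoted in the paragraph immediately preceding the corollary.

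There is no real obstacle in this argument once the preceding material is in place: the monomial and binomial constructions are elementary, the trinomial result is a direct citation, and the failure of $X^4+X^3-X+1$ to divide any Littlewood polynomial is itself one of the computational outputs of the paper. The corollary is essentially an assembly of these three ingredients, with the genuine mathematical work living either in \cite{DJ} (for the trinomial theorem) or in the algorithmic verification of Section~\ref{calc} (for the quadrinomial witness).
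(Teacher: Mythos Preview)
Your argument is correct and follows the same approach as the paper: the paper's justification is the single sentence preceding the corollary, which cites \cite[Theorem~1]{DJ} for the trinomial case and points to Table~\ref{BQndL} for the quadrinomial witness. You simply spell out the trivial monomial and binomial cases explicitly, which the paper leaves implicit.
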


Our computations also show that each quadrinomial in $\nn_{\leq 11}$ divides some 
Littlewood polynomial. Therefore  the following question is of interest.

\begin{question}
Does there exist a Newman quadrinomial with no Littlewood multiple? 
Equivalently, does the set $\nn \!\setminus\! \ll(\nn)$ contain a quadrinomial?
\end{question}

\noindent If such quadrinomial exists, it must be of degree $ \geq 12$.
 
\subsection{Borwein polynomials that do not divide any Newman polynomial} Recall that a Newman
polynomial has no nonnegative real roots. However, not every polynomial $P(X) \in \bb^{-}$ divides a Newman polynomial. Our computations show that 
 
\begin{proposition}
The smallest degree Borwein polynomial without nonnegative real roots and no 
Newman multiple is
\[
p(X)=X^3 + X^2 - X+ 1,
\] 
and
\[
\bb^{-}_{\leq 3}\!\setminus\!\nn(\bb)=\{ \pm p(X), \pm p^*(X)\}.
\]
\end{proposition}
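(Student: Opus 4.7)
The approach is to enumerate the finite set $\bb^{-}_{\leq 3}$ explicitly and, for each element, apply Algorithm~\ref{bfs} from Section~\ref{sal} with digit set $\dd=\{0,1\}$ to decide whether the polynomial divides a Newman polynomial. The case analysis is cut down by two symmetries: $P\mid Q$ if and only if $(-P)\mid Q$, and since the leading coefficient of any element of $\nn$ equals $1$, the reciprocal map $Q\mapsto Q^*$ preserves $\nn$, so that $P$ has a Newman multiple if and only if $P^*$ does. Together these identify $\{\pm p,\pm p^*\}$ as a single orbit, and it therefore suffices to handle $p(X)=X^{3}+X^{2}-X+1$ and to exhibit Newman multiples for orbit representatives of the remaining elements of $\bb^{-}_{\leq 3}$.

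First I would list the elements of $\bb^{-}_{\leq 3}$. The two linear elements $\pm(X+1)$ and the six quadratic elements (up to sign, $X^{2}+X+1$, $X^{2}-X+1$, $X^{2}+1$) each have an immediate Newman multiple: $X+1$ and $X^{2}+X+1$ are themselves in $\nn$, $X^{2}-X+1$ divides $X^{3}+1$, and $X^{2}+1$ divides $X^{3}+X^{2}+X+1$. For cubics, any element of $\bb^{-}_{3}$ must have leading and constant coefficients of the same sign, so a short case analysis yields exactly $16$ polynomials. For each of the $12$ cubics outside the orbit of $p$, I would exhibit an explicit Newman multiple obtained by multiplying by a short cofactor, for instance $(X^{3}-X^{2}+1)(X^{3}+X^{2}+X+1)=X^{6}+X^{3}+X+1\in\nn$.

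The substantive step is to certify that $p(X)=X^{3}+X^{2}-X+1$ has no Newman multiple. To apply Algorithm~\ref{bfs} I would first verify that $p$ is monic with no roots on $|z|=1$: one has $p(\pm 1)=2\neq 0$, and the discriminant of $p$ equals $-44<0$, so $p$ has a single real root $\alpha\in(-2,-1)$ together with a non-real conjugate pair $z,\bar z$ satisfying $\alpha|z|^{2}=-1$; hence $|z|^{2}=-1/\alpha<1$ and both non-real roots lie strictly inside the unit circle. Running the algorithm on input $p$ and $\dd=\{0,1\}$ then produces a finite search tree, every branch of which closes without producing a $\{0,1\}$-multiple of $p$; by the correctness of Algorithm~\ref{bfs} this certifies that no Newman multiple of $p$ exists. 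The main obstacle is carrying out this finite but somewhat tedious search-tree verification without error; once it is done, the conclusion for $-p$, $p^{*}$ and $-p^{*}$ is immediate from the symmetries identified above.
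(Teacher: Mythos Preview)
Your proposal is correct and follows essentially the same approach as the paper: the proposition is a byproduct of the computations described in Section~\ref{calc}, where Algorithm~\ref{bfs} with $\dd=\{0,1\}$ is run over all of $\bb_{\leq 9}$, and the paper offers no separate argument. Your version is a more explicit, hand-checkable rendition of the same method---you exhibit concrete Newman multiples for the small orbit representatives and reserve the algorithmic certificate for $p(X)=X^{3}+X^{2}-X+1$ alone---but the key step (certifying that $p$ has no Newman multiple via the finite graph search of Algorithm~\ref{bfs}) is identical.
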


Recall that the \emph{Mahler measure} of a polynomial 
\[
p(X) = \sum_{i=k}^{n}a_kX^k = a_n\prod_{k=1}^{n}(X-\al_k)\in\C[X]
\]
is defined by 
\[
M(p) = |a_n|\prod_{k=1}^{n}\max\{1, |\al_k| \}.
\]
 Hare and Mossinghoff \cite{HM} considered the following problem: does there 
exist a real number $\sigma>1$ such that if $f(X)\in\Z[X]$ has no nonnegative real roots and $M(f)<\sigma$, then 
$f(X)$ divides some Newman polynomial $F(X)$? Based on the results of Dufresnoy and Pisot\cite{DP}, Amara \cite{A} and 
Boyd \cite{B1,B2} they proved that every negative Pisot number which has no positive real algebraic conjugate and is larger than $-\tau$, where $\tau= (1+\sqrt{5})/2\approx 1.61803$ is the golden ratio, is a root of some Newman polynomial. They also proved that certain negative Salem numbers greater that $-\tau$ are roots of Newman polynomials. Moreover, they have constructed a number of polynomials that have Mahler measure 
less than $\tau$, have no positive real roots and yet do not divide any Newman polynomial. 
The smallest Mahler measure in their list is approximately $1.556$ attained by the polynomial $X^6-X^5-X^3+X^2+1$.   
We found that among Borwein polynomials of degree at most 9 there are exactly 16 polynomials which extend this list and 
have Mahler measure less than $1.556$. They are given in Table~\ref{NMlHM} (we omit reciprocal polynomials).

\begin{table}[h]
 \caption {Polynomials in $\bb_{\leq 9}^{-} \!\setminus \!\nn(\bb)$ of small Mahler measure.}\label{NMlHM}
\begin{tabular}{ll}
\toprule
Polynomial $P(X) \in \bb_{\leq 9}^{-} \!\setminus \!\nn(\bb)$ & Mahler measure\\
\midrule
$ X^{9} + X^{8} + X^{7} - X^{5} - X^{4} - X^{3} + 1 $\;\;\;\; & $ 1.436632261 $\\ 
$ X^{9} + X^{8} - X^{3} - X^{2} + 1 $ & $ 1.483444878 $ \\
$ X^{9} - X^{7} - X^{5} + X^{3} + X + 1 $ & $ 1.489581321 $\\ 
$ X^{8} - X^{7} - X^{4} + X^{3} + 1 $ & $ 1.489581321 $\\
$ X^{8} + X^{7} - X^{3} - X^{2} + 1 $ & $ 1.518690904 $\\
$ X^{8} + X^{7} + X^{6} - X^{4} - X^{3} - X^{2} + 1 $ & $ 1.536566472 $\\ 
$ X^{9} - X^{8} - X^{6} + X^{5} + 1 $ & $ 1.536913983 $\\ 
$ X^{9} + X^{5} - X^{3} - X^{2} + 1 $ & $ 1.550687063 $\\  
\bottomrule
 \end{tabular}
 \end{table}
 
 Note that the third polynomial in Table~\ref{NMlHM} factors as $(X + 1) \cdot (X^{8} - X^{7} - X^{4} + X^{3} + 1)$ and 
 the second factor is the fourth polynomial of the table. All the other polynomials in this table (except for the third one) are irreducible over $\Z$.

\subsection{Examples with special factors} 

The following example demonstrates that if two polynomials have Littlewood multiples, their product not necessarily has one.

\begin{example}
The Newman polynomial 
\[
P(X)=X^{11} + X^{10}+X^9+X^8 + X^7 + X^5 + X^4 + X^3  + 1\in\nn
\]
which factors (over $\Z$) as
\[
P(X) = (X^2 + X  + 1)(X^4 + X^3  + 1)(X^5-X^4 + X^3 -X  + 1)
\]
has no Littlewood multiple, although both noncyclotomic factors of $P(X)$ have Littlewood multiples. 
\end{example}

Moreover, $p(X)\in\nn(\bb)$ not necessarily implies $p(X)p^*(X)\in\nn(\bb)$, as can be seen from Example~\ref{psmp}.

\begin{example}\label{psmp}
Let
\[
p(X) = X^3 - X + 1
\]
be the minimal polynomial of the largest negative Pisot number $-\theta \approx -1.32472$.  Both $p(X)$ and its reciprocal $p^{*}(X) = X^3 - X^2 + 1$ have Newman multiples
\[
P(X) = X^5+X^4+1 \quad \text{ and } \quad P^{*}(X) = X^5+X+1,
\]
respectively.

However, the product
\[
p(X)p^{*}(X) = X^6-X^5-X^4+3X^3-X^2-X+1
\]
has no Newman multiple. In contrast, $p(X)p^{*}(X)$ divides Borwein polynomial
\[
\begin{array}{rcl}
	Q(X) &=& (X^2 + X + 1)  (X^3 - X + 1) (X^3 - X^2 + 1)=\\
		&=& X^8 - X^6 + X^5 + X^4 + X^3 - X^2 + 1
\end{array} 
\] that, in turn, has its own Littlewood multiple.
\end{example}

The last example in this subsection illustrates the ability of Algorithm~\ref{bfs} to work with polynomials $P(X)$ with repeated noncyclotomic roots.

\begin{example}\label{exmr}

The polynomial $p(X) = X^3-X+1$ has a Newman multiple (see Example \ref{psmp}). However, its square $p(X)^2$ does not.

The square $p(X)^2$ divides a Littlewood polynomial $L(X)$ of degree $195$ from Table~\ref{PSPL}, while the cube $p(X)^3$ has no Littlewood multiple at all.

These two facts imply that the Borwein multiple
\[
P(X) = (X^2+X+1)p(X)^2=X^8 + X^7 - X^6 + X^4 + X^3 - X + 1
\] of $p^2(X)$ divides no Newman polynomial, but $P(X)$ has a Littlewood multiple, namely the polynomial $L(X)\Phi_3(X^{196})$, where $\Phi_3(X)=X^2+X+1$.

\begin{table}[h!]
 \caption {Coefficients $l_0,l_1,\dotsc,l_{195}\in\{-1,1\}$ of the Littlewood 
 multiple $L(X)=\sum_{j=0}^{195}l_jX^{195-j}$ of
 $p(X)=(X^3-X+1)^2$.}\label{PSPL}
\begin{tabular}{l}
\toprule
$++-++-+----+++++-+-++-+++++-+-$\\
$++++----+++-+++-++++-+-++++-+-$\\
$+--+-+-+-++--+-+-++++-++---++-$\\
$+-+-+-+++-+-++-++------++-++-+$\\
$-++++-++-+-+---++++----+++-+++$\\
$++-+++++-----+-----+---+++-+--$\\
$-++---++-+--+-+\,-$\\
\bottomrule
 \end{tabular}
 \end{table}

\end{example}

\subsection{Irreducible non-cyclotomic polynomials with unimodular roots}\label{unimod}

In the context of the work of Borwein and Hare \cite{BH}, Stankov \cite{dst} on the spectra of Salem numbers and Salem numbers that are roots of Newman polynomials by Hare and Mossinghoff \cite{HM}, we also investigated the subset $\uu_{\leq 9}^{irr}$ of monic irreducible non-cyclotomic Borwein polynomials of degree at most $9$ with unimodular roots. The set $\uu_{\leq 9}^{irr}$ contains exactly $52$ polynomials. It can be partitioned in to $3$ disjoint subsets
\[
\uu_{\leq 9}^{irr} = \uu_{\leq 9}^1 \cup \uu_{\leq 9}^2 \cup \uu_{\leq 9}^{spor},
\] where:

\begin{itemize}
\item[--] $\uu_{\leq 9}^1$ consists of $28$ minimal polynomials of Salem numbers (\emph{Salem polynomials}) or minimal polynomials of negative Salem numbers ($\al$ is a negative Salem number if $-\al$ is a Salem number). Salem polynomials are given in  Table~\ref{MPSN1};  negative--Salem polynomials can be obtained by substitution $X \to -X$. All $P(X)$ from Table~\ref{MPSN1} belong to the set $ \sm{\ll(\bb)}{\nn(\bb)}$.

\item[--] $\uu_{\leq 9}^2$ consists of $19$ minimal polynomials of complex Salem numbers. $P(X)\in\uu_{\leq 9}^{irr}$ is a complex Salem polynomial if exactly four of its roots, $\{z,  \overline{z}, z^{-1},\overline{z}^{-1}\}$ do not lie on the unit circle. These polynomials $P(X)$ are shown in Table~\ref{MPCSN1}, where  $P(-X)$ are omitted.  All but one (no. 4) polynomials from  Table~\ref{MPCSN1} belong to $\ll(\bb)$. Only polynomials no. 5, 6 and 8 of Table~\ref{MPCSN1} belong to $\nn(\bb)$.

\item[--] $\uu_{\leq 9}^{spor}$ contains remaining $5$ `sporadic' cases from $\uu_{\leq 9}^{irr}$; these polynomials are listed in Table~\ref{SBPUNR}; $P(-X)$ are omitted. Polynomial no.1 has $2$ unimodular roots; no. 2 and 3 has $4$ unimodular roots each. All polynomials from Table~\ref{SBPUNR} belong to $\sm{\ll(\bb)}{\nn(\bb)}$.

\end{itemize}

\begin{table}[h]
 \caption { Salem polynomials from $\uu_{\leq 9}^1$.}\label{MPSN1}
\begin{tabular}{lc}
\toprule
$P(X) \in \bb_{\leq 9}$ &  $P(-X)\in\nn(\bb)$\\
\midrule
$ X^{4} - X^{3} - X^{2} - X + 1 $ & \emph{no} \\
$ X^{6} - X^{5} - X^{4} - X^{3} - X^{2} - X + 1 $ & \emph{no} \\
$ X^{6} - X^{5} - X^{4} - X^{2} - X + 1 $ & \emph{no} \\
$ X^{6} - X^{5} - X^{4} + X^{3} - X^{2} - X + 1 $ & \emph{yes} \\
$ X^{6} - X^{5} - X^{3} - X + 1 $ & \emph{yes} \\
$ X^{6} - X^{4} - X^{3} - X^{2} + 1 $ & \emph{yes}\\
$ X^{8} - X^{7} - X^{6} - X^{5} - X^{3} - X^{2} - X + 1 $ & \emph{no} \\
$ X^{8} - X^{7} - X^{6} - X^{4} - X^{2} - X + 1 $ & \emph{no} \\
$ X^{8} - X^{7} - X^{6} - X^{2} - X + 1 $ &  \emph{no} \\
$ X^{8} - X^{7} - X^{6} + X^{4} - X^{2} - X + 1 $ & \emph{yes} \\
$ X^{8} - X^{7} - X^{5} - X^{4} - X^{3} - X + 1 $ & \emph{no} \\
$ X^{8} - X^{7} - X^{5} + X^{4} - X^{3} - X + 1 $ & \emph{yes} \\
$ X^{8} - X^{6} - X^{5} - X^{3} - X^{2} + 1 $ & \emph{yes} \\
$ X^{8} - X^{5} - X^{4} - X^{3} + 1 $ &  \emph{yes} \\
\bottomrule
 \end{tabular}
 \end{table}

\begin{table}[h]
 \caption {Complex Salem polynomials $\uu_{\leq 9}^2$; $P(-X)$ omitted}\label{MPCSN1}
\begin{tabular}{rlc}
\toprule
\# & $P(X) \in \bb_{\leq 9}$ &   $P(-X)\in\nn(\bb)$\\
\midrule
1 & $ X^{6} - X^{5} + X^{4} + X^{3} + X^{2} - X + 1 $  &  \emph{no} \\
2 & $ X^{8} - X^{7} - X^{6} + X^{5} + X^{4} + X^{3} - X^{2} - X + 1 $  & \emph{yes} \\
3 & $ X^{8} - X^{7} + X^{5} + X^{3} - X + 1 $ &  \emph{yes} \\
4 & $ X^{8} - X^{7} + X^{5} + X^{4} + X^{3} - X + 1 $  &  \emph{no} \\
5 & $ X^{8} - X^{7} + X^{6} - X^{4} + X^{2} - X + 1 $  & \emph{yes} \\
6 & $ X^{8} - X^{7} + X^{6} + X^{4} + X^{2} - X + 1 $  & \emph{yes} \\
7 & $ X^{8} - X^{7} + X^{6} + X^{5} + X^{4} + X^{3} + X^{2} - X + 1 $  &  \emph{no} \\
8 & $ X^{8} + X^{5} + X^{4} + X^{3} + 1 $ &   \emph{yes} \\
9 & $ X^{8} + X^{6} - X^{4} + X^{2} + 1 $  &  \emph{no} \\
10 & $ X^{8} + X^{6} + X^{5} - X^{4} + X^{3} + X^{2} + 1 $  &  \emph{no} \\
\bottomrule
 \end{tabular}
 \end{table}

\bigskip

\begin{table}[h]
 \caption {Sporadic polynomials from $\uu_{\leq 9}^{spor}$; $P(-X)$ omitted}\label{SBPUNR}
\begin{tabular}{lc}
\toprule
$P(X) \in \bb_{\leq 9}$ &  $P(-X)\in\nn(\bb)$\\
\midrule
$ X^{8} - X^{7} + X^{6} - X^{5} - X^{4} - X^{3} + X^{2} - X + 1 $ &  \emph{no} \\
$ X^{8} - X^{7} - X^{6} + X^{5} - X^{4} + X^{3} - X^{2} - X + 1 $ &  \emph{no} \\
$ X^{8} - X^{6} - X^{4} - X^{2} + 1 $ &   \emph{no} \\
\bottomrule
 \end{tabular}
 \end{table}

We end Section \ref{unimod} by demonstrating a few notable examples of $P(X) \in \uu_{\leq 9}^{irr}$.

\begin{example}\label{ex1uni}
Complex Salem polynomials
\[
P(X) = X^{8} - X^{7} + X^{6} - X^{4} + X^{2} - X + 1
\]
and $P(-X)$ belong to $\ll(\bb)\cap\nn(\bb)$. 

In contrast, complex Salem polynomials 
\[
Q(X) = X^{8} - X^{7} + X^{5} + X^{4} + X^{3} - X + 1
\]
and $Q(-X)$ are in $\sm{\bb}{(\ll(\bb)\cup\nn(\bb))}$. Moreover, $Q(X)$ and $Q(-X)$ are two unique polynomials from $\uu_{\leq 9}^{irr}$ with no Littlewood multiple. (The roots of $P(X)$ and $Q(X)$ are depicted  in Figure~\ref{ex1P} and \ref{ex1Q}.)

\end{example}

\begin{example}\label{ex2uni}
Sporadic polynomials
\[
P(X) = X^{8} - X^{7} - X^{6} + X^{5} - X^{4} + X^{3} - X^{2} - X + 1
\]
and $P(-X)$ have 4 unimodular roots and 4 real roots (positive and negative, see Figure~\ref{ex2P}).  

Sporadic polynomials
\[
Q(X) = X^{8} - X^{7} + X^{6} - X^{5} - X^{4} - X^{3} + X^{2} - X + 1
\]
and $Q(-X)$ have exactly 2 unimodular roots each, see Figure~\ref{ex2Q}. It is notable that $\{P(\pm X), Q(\pm X)\}  \subset \sm{\ll(\bb)}{\nn(\bb)}$.
\end{example}

\begin{figure}[h]
	\begin{subfigure}{.45\linewidth}
		\centering
      		\includegraphics[scale=0.5]{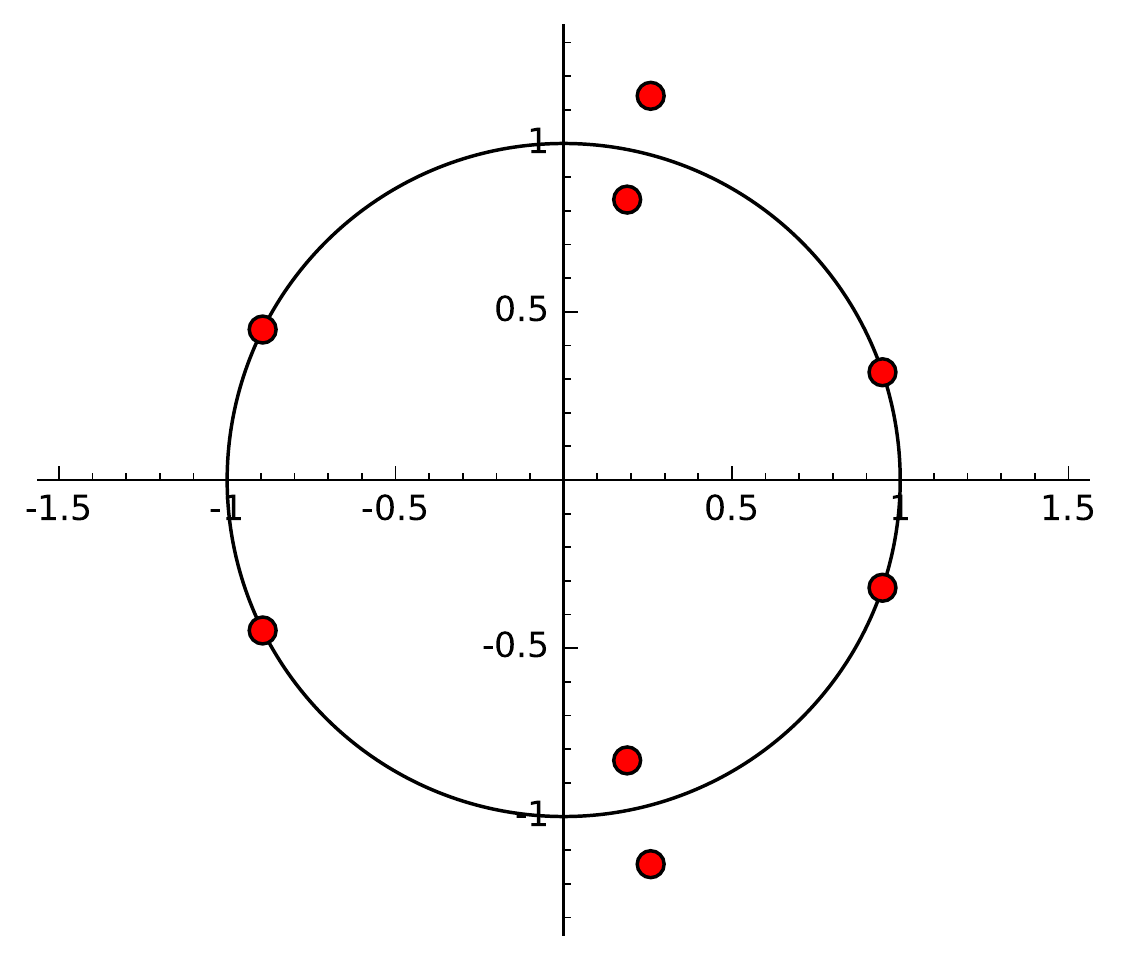}
      		\caption{$X^{8} - X^{7} + X^{6} - X^{4} + X^{2} - X + 1$.}\label{ex1P}
	\end{subfigure}
	\begin{subfigure}{.45\linewidth}
      		\centering
      		\includegraphics[scale=0.5]{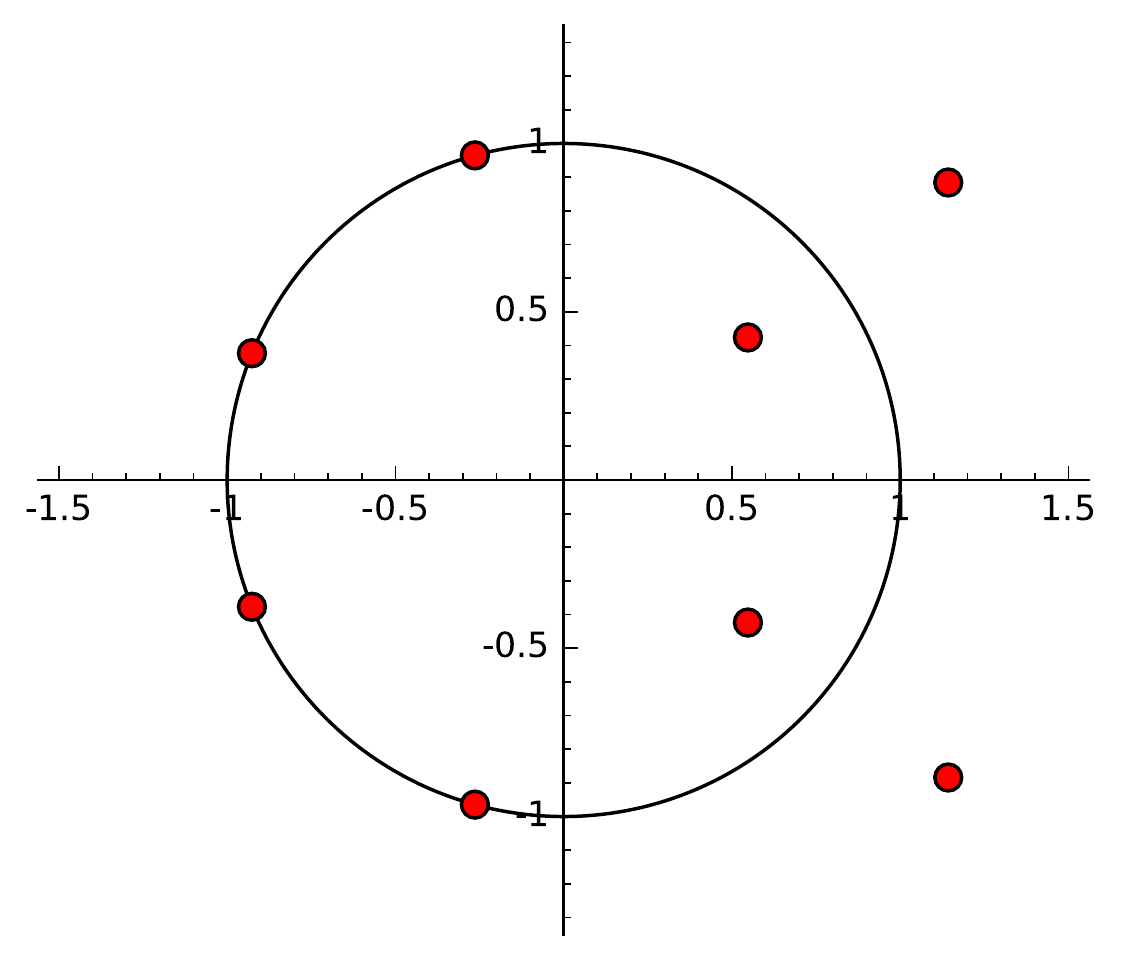}
      		\caption{$X^{8} - X^{7} + X^{5} + X^{4} + X^{3} - X + 1$.}\label{ex1Q}
        \end{subfigure}
	\begin{subfigure}{.45\linewidth}
		\centering
		\includegraphics[scale=0.5]{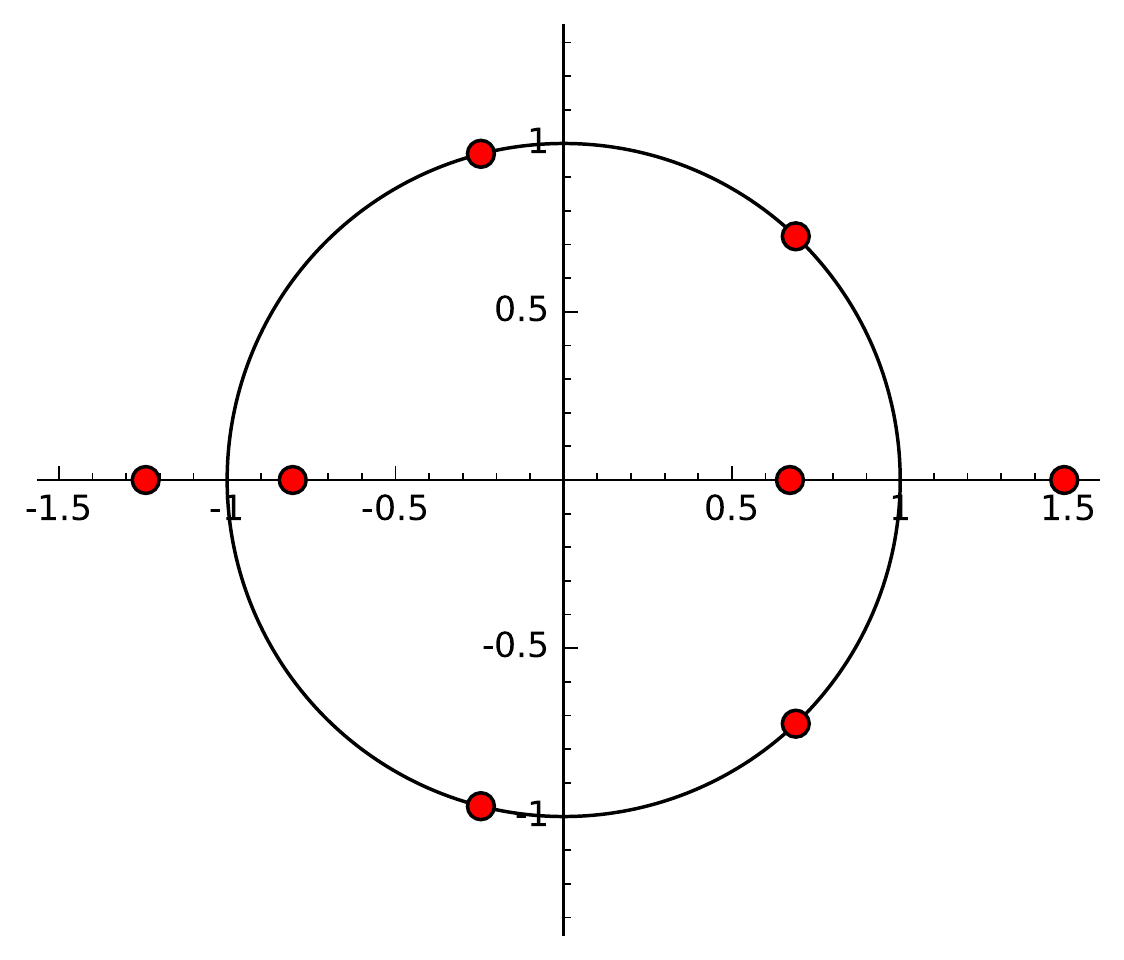}
		\caption{$\protect\begin{array}{l}
					X^{8} - X^{7} - X^{6} + X^{5} - X^{4}+\\
        					+ X^{3} - X^{2} - X + 1.
				\protect\end{array}$}\label{ex2P}
		\label{poly5}
        \end{subfigure}
        \begin{subfigure}{.45\linewidth}
        	\centering
        	\includegraphics[scale=0.5]{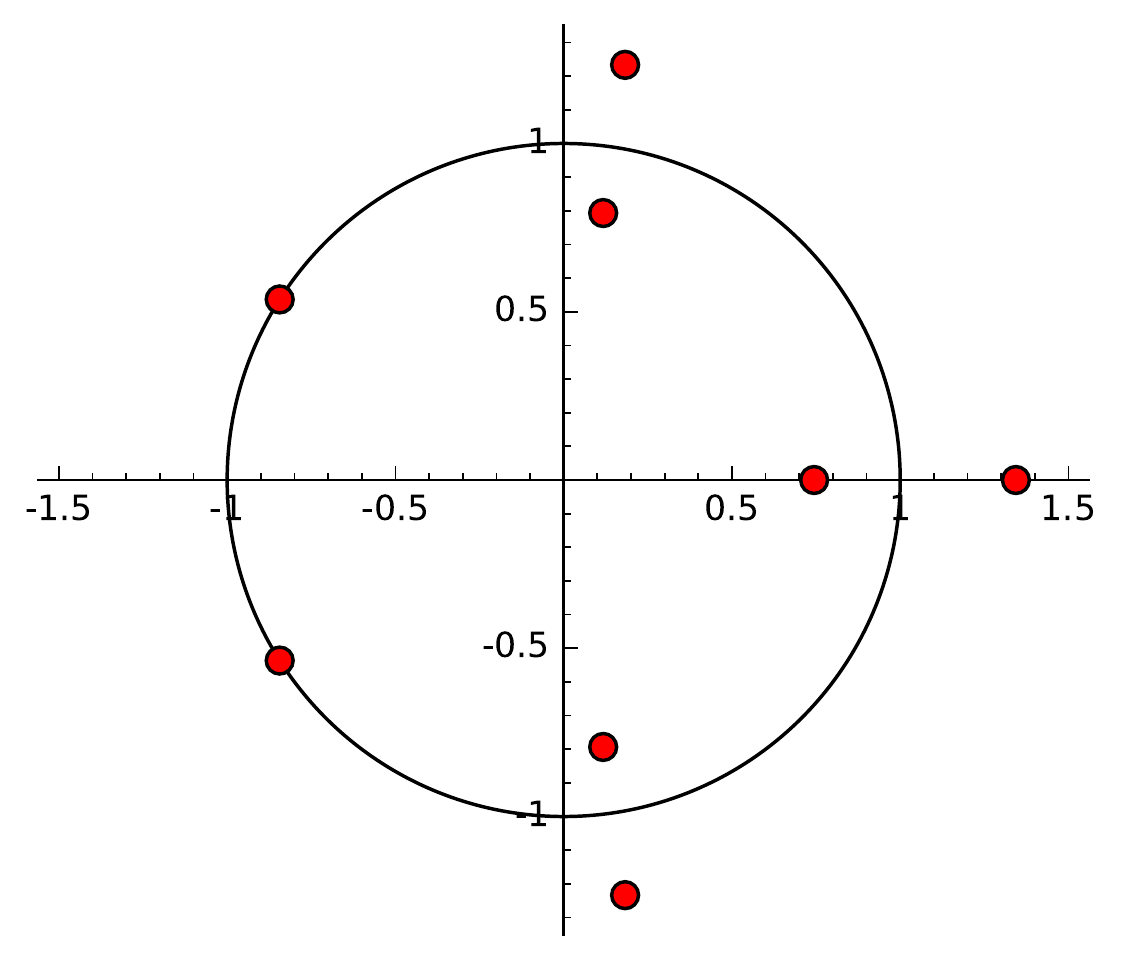}
        	\caption{$\protect\begin{array}{l}
        					X^{8} - X^{7} + X^{6} - X^{5} - X^{4}-\\
						- X^{3} + X^{2} - X + 1.
			       \protect\end{array}$}\label{ex2Q}
		\label{poly4}
	\end{subfigure}
	\caption{Complex roots of $P(X), Q(X)$ from Example \ref{ex1uni} and Example \ref{ex2uni}} 
\end{figure}\label{ex12fig}

\section{Computations}\label{calc}

Assume that $p(X)$ is a nonzero polynomial with integer coefficients and recall that \emph{the content} of $p(X)$ 
is the greatest common divisor of all of its coefficients.  Suppose we have a factorization $p(X) =a \cdot C(X)N(X)$, 
where $a\in\Z$, $C(X),N(X)\in\Z[X]$, the polynomial 
$C(X)$ is a product of cyclotomic polynomials, whereas the polynomial $N(X)$  has no cyclotomic divisors, the 
content of $N(X)$ equals $1$ and the leading coefficient of $N(X)$ is a positive integer. Then $N(X)$ is 
called \emph{the noncyclotomic part of} $p(X)$ and the polynomial $C(X)$ is called the \emph{cyclotomic part of} $p(X)$. 
Note that the noncyclotomic part of a polynomial is uniquely determined.

The set $\bb_{\leq 9}$ is the union of the following disjoint subsets (see Table~\ref{CLa}):
\begin{itemize}
\item[$\cc$]  -- the set of polynomials from $\bb_{\leq 9}$ which are products of cyclotomic polynomials;
\item[$\ff^1$]  --  the set of polynomials from $\bb_{\leq 9}$ whose noncyclotomic part is an irreducible nonconstant polynomial;
\item[$\ff^2$]  --  the set of polynomials from $\bb_{\leq 9}$ whose noncyclotomic part is the product of two distinct monic irreducible nonconstant polynomials;
\item[$\mm$]  --  the set of polynomials from $\bb_{\leq 9}$ whose noncyclotomic part is the square of a monic irreducible nonconstant polynomial.  
\end{itemize}

The same classification of the elements of $\bb_d$ is also valid for degrees $d=10$ and $11$, but no longer holds 
for $\bb_{12}$.

The numbers (computed with SAGE \cite{SAGE}) $\# \bb_d$, $\# \cc_d$, $\# \ff^1_d$, 
$\# \ff^2_d$, $\# \mm_d$, for $d\in\{1,2,\dotsc,9\}$, are given in Table~\ref{CLa}. 
(Recall that $\mathcal{A}_d$ denotes the set of polynomials from $\mathcal{A}$ of degree $d$.)
In particular, $\#\bb_{\leq 9} = 39364$. 

\begin{table}[h]
 \caption {Partition of the set $\bb_{\leq 9}$.}\label{CLa}
\begin{tabular}{rccccc}
\toprule
 $d$ & $\# \bb_d$ & $\# \cc_d$ & $\# \ff^1_d$ & $\# \ff^2_d$ & $\# \mm_d$ \\
\midrule
$ 1 $ & $ 4 $ & $ 4 $ & $ 0 $ & $ 0 $ & $ 0 $ \\
$ 2 $ & $ 12 $ & $ 8 $ & $ 4 $ & $ 0 $ & $ 0 $ \\
$ 3 $ & $ 36 $ & $ 12 $ & $ 24 $ & $ 0 $ & $ 0 $ \\
$ 4 $ & $ 108 $ & $ 20 $ & $ 88 $ & $ 0 $ & $ 0 $ \\
$ 5 $ & $ 324 $ & $ 32 $ & $ 292 $ & $ 0 $ & $ 0 $ \\
$ 6 $ & $ 972 $ & $ 48 $ & $ 892 $ & $ 32 $ & $ 0 $ \\
$ 7 $ & $ 2916 $ & $ 68 $ & $ 2784 $ & $ 64 $ & $ 0 $ \\
$ 8 $ & $ 8748 $ & $ 96 $ & $ 8352 $ & $ 292 $ & $ 8 $ \\
$ 9 $ & $ 26244 $ & $ 136 $ & $ 25228 $ & $ 880 $ & $ 0 $ \\
\bottomrule
 \end{tabular}
 \end{table}

We implemented Algorithm~\ref{al1} in C using library Arb \cite{arb} for arbitrary-precision floating-point 
ball arithmetic and ran it on the SGI Altix 4700 server at
Vilnius University.  We used OpenMP \cite{omp} for an implementation of multiprocessing. 

For every Borwein polynomial $p(X)$ of degree at most 9 we calculated whether it divides some Littlewood polynomial 
as well as whether $p(X)$ divides some Newman polynomial. Moreover, for every Newman polynomial of degree 
at most $11$ we calculated whether it has a Littlewood multiple.  We will briefly explain how these calculations 
were organized. 

First, note that in view of  Proposition~\ref{ocd} a polynomial $P(X)\in\Z[X]$ divides some Littlewood polynomial if and only 
if its noncyclotomic part divides some Littlewood polynomial.  
Similarly, if $P(1)\neq 0$ then $P(X)$ has a Newman multiple if and only 
if its noncyclotomic part has a Newman multiple. (Note that Newman polynomials do not have 
nonnegative real roots.) Therefore when considering the statements $P(X)\in\ll(\bb)$ and $P(X)\in\nn(\bb)$ we can omit 
the cyclotomic part of the polynomial $P(X)$. Also, by Proposition~\ref{ocd}, if $P(X)\in\cc$ then $P(X)$ divides 
some Littlewood polynomial; $P(X)\in\cc$ divides some Newman polynomial if and only if $P(1)\neq 0$.

For each noncyclotomic irreducible factor of polynomials from $\bb_{\leq 9}$ we ran our algorithm and calculated 
whether it has a Littlewood multiple and whether it has a Newman multiple. This allowed us to easily verify the 
statements $P(X)\in\ll(\bb)$ and $P(X)\in\nn(\bb)$ for polynomials $P(X)\in\ff^1$. Further when considering the 
statement $P(X)\in\ll(\bb)$ we omitted those polynomials $P(X)$ from $\ff^2$ and $\mm$ which had a noncyclotomic 
irreducible factor that does not divide any Littlewood polynomial. The procedure for calculating Newman multiples was the same.    
Finally, we ran our algorithm for noncyclotomic parts of the remaining polynomials from $\ff^2$ and $\mm$.
 
 We often used the following two facts to decide that a polynomial has no Newman multiple. First, Newman polynomials 
do not have nonnegative real roots. On the other hand, Odlyzko and Poonen \cite{OP} proved that roots of Newman polynomials 
are contained in the annulus $1/\tau < |z| < \tau$, where $\tau = (1+\sqrt{5})/2\approx 1.61803$ is the golden ratio.

There are exactly $376$ Borwein polynomials of degree at most 9 that have unimodular roots which are not 
roots of unity. For every such polynomial we ran  Algorithm~\ref{al1} and omitted unimodular roots when 
checking the condition \eqref{eq:237con} (see the note after Algorithm~\ref{al1}). We succeeded in deciding 
whether these polynomials belong to $\ll(\bb)$ and $\nn(\bb)$.

We introduced a new variable $0\leq \delta <1$ (note that $B=\max\{|a|\mid a \in \dd\}=1$ in case of  
Littlewood and Newman multiples) to fasten the search for  Littlewood and Newman multiples of polynomials 
in $\ll(\bb_{\leq 9})$ and $\nn(\bb_{\leq 9})$, respectively (see Figure~\ref{fdpp}). 
For a given $\delta$ we changed the condition \eqref{eq:237con} 
as follows:

\begin{align*}
  |R(\al_j)| &\leq \frac{B - \delta}{||\al_j|-1|},\;\,\\[0,1cm]
  |R'(\al_j)| &\leq \frac{1!(B -\delta)}{||\al_j|-1|^2},\\
  &\dotsb\\
  |R^{(e_j-1)}(\al_j)| &\leq
  \frac{(e_j-1)!(B-\delta)}{||\al_j|-1|^{e_j}}.
\end{align*}
This eliminates some of the vertices in the original graph $\G(P, \dd)$. We start  
with the initial value $\delta = 0.95$.  If a Littlewood (or Newman) multiple is found then we are done. 
Otherwise we decrease $\delta$ by $0.05$ and try again. Note that for polynomials in $\bb\!\setminus\!\ll(\bb)$ and 
$\bb\!\setminus\!\nn(\bb)$ the variable $\delta$ always reaches the value $\delta=0$ in order to construct the 
full graph $\G(P, \dd)$. 

\begin{figure}[h]
    \caption{Distribution of noncyclotomic factors $F(X)$ of polynomials from $\bb_{\leq 9}$
    such that $F(X)\in\ll(\Z[X])$.}\label{fdpp}
      \centering
      \includegraphics[scale=0.5]{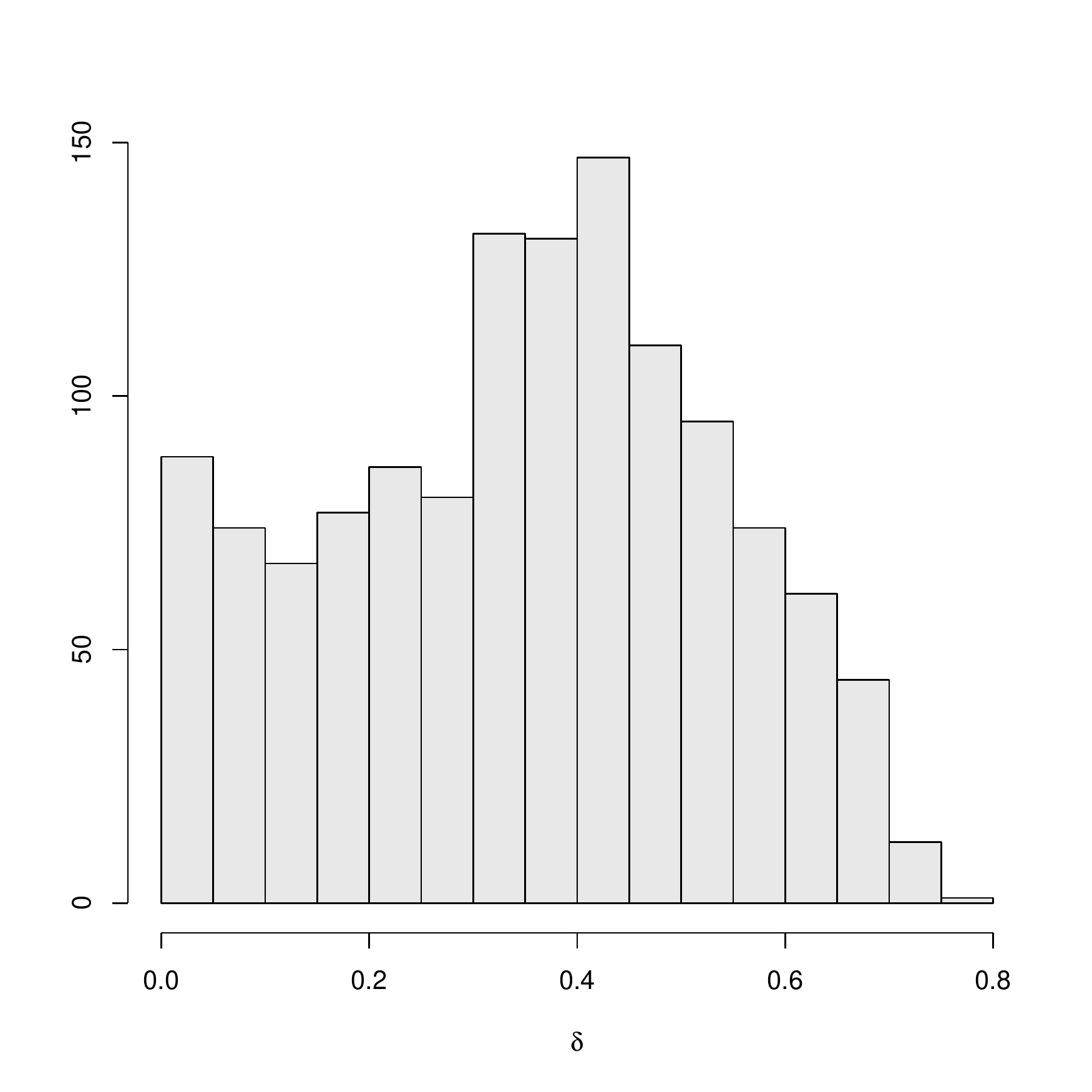}
        \end{figure}

The above mentioned computations took approximately $296$ hours of CPU time. 
The maximum recursion depth reached when searching for Littlewood multiples was $57\,767$, whereas for 
Newman multiples it was $825$.  For instance, it took approximately $119$ minutes of CPU time to run our 
algorithm to decide that the polynomial
\[
X^{9} + X^{8} - X^{7} - X^{5} + X^{3} + X^{2} - 1
\]
has no Littlewood multiple. The graph $\G(P,\dd)$, constructed for this polynomial, contained $1\,428\,848$ vertices. The 
maximal recursion depth reached for this polynomial was $471$. On the other hand, it took $92$ minutes 
of CPU time to find a Littlewood multiple for the polynomial 
\[
X^{9} - X^{8} + X^{7} + X^{6} - X^{5} + X^{4} - X^{3} + X - 1.
\]
The graph $\G(P,\dd)$, constructed for this polynomial, contained $9\,372\,425$ vertices and the maximal recursion depth 
was $43554$.

\subsection{Omitting cyclotomic factors}\label{nocycl}

Given a set $X$ of numbers denote by $-X$ the set $\{ -x\mid x\in X\}$.

\begin{lemma}\label{lm:664}
Let $\Phi_n(X)$ be the $n$-th cyclotomic polynomial. If a positive integer $t$ is 
not divisible by $n$ then $\Phi_n(X)$ divides the polynomial 
$X^{(n-1)t} + X^{(n-2)t} + \dotsb + X^{t} + 1$. 
\end{lemma}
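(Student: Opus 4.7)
The plan is to rewrite the sum as a ratio of two $X^{k}-1$ polynomials and then use the well-known divisibility criterion for cyclotomic polynomials.

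First I would observe that
\[
X^{(n-1)t}+X^{(n-2)t}+\dotsb+X^{t}+1 \;=\; \frac{X^{nt}-1}{X^{t}-1},
\]
so the claim is equivalent to showing that $\Phi_{n}(X)$ divides $X^{nt}-1$ but is coprime to $X^{t}-1$. Both of these are immediate consequences of the standard fact that for any positive integer $k$, the cyclotomic polynomial $\Phi_{n}(X)$ divides $X^{k}-1$ if and only if $n\mid k$.

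Indeed, since $n\mid nt$, the first divisibility $\Phi_{n}(X)\mid X^{nt}-1$ holds. By hypothesis $n\nmid t$, so $\Phi_{n}(X)\nmid X^{t}-1$; combined with the irreducibility of $\Phi_{n}(X)$ over $\Q$, this yields $\gcd(\Phi_{n}(X),X^{t}-1)=1$. From the factorization $X^{nt}-1=(X^{t}-1)\cdot\bigl(X^{(n-1)t}+\dotsb+X^{t}+1\bigr)$ it then follows by Euclid's lemma (in $\Q[X]$) that $\Phi_{n}(X)$ divides the second factor, which is the desired conclusion.

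The only non-routine ingredient is the criterion ``$\Phi_{n}(X)\mid X^{k}-1 \iff n\mid k$'', which is classical: the roots of $\Phi_{n}$ are exactly the primitive $n$-th roots of unity, and a primitive $n$-th root of unity $\zeta_{n}$ satisfies $\zeta_{n}^{k}=1$ precisely when $n\mid k$. So the main (and essentially the only) step is to invoke this criterion in both directions; there is no real obstacle beyond bookkeeping.
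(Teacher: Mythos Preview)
Your proof is correct and essentially identical to the paper's: both show that $\Phi_n(X)\mid X^{nt}-1$ while $\gcd(\Phi_n(X),X^{t}-1)=1$, then use the factorization $X^{nt}-1=(X^{t}-1)\bigl(X^{(n-1)t}+\dotsb+1\bigr)$ and irreducibility of $\Phi_n$ to conclude. The only cosmetic difference is that the paper invokes the product formula $X^{m}-1=\prod_{d\mid m}\Phi_d(X)$ where you cite the equivalent criterion via primitive roots of unity.
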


\begin{proof}
By applying formula
\[
X^m - 1 = \prod_{d\mid m}\Phi_d(X),
\]
which is valid for every positive integer $m$, we obtain that $X^{t}-1$ is not 
divisible by $\Phi_n(X)$, because $t$ is not a multiple 
of $n$. Hence $\Phi_n(X)$ and $X^{t}-1$ are coprime, since $\Phi_n(X)$ 
is irreducible. 

Obviously, $X^n-1$ divides $X^{nt}-1$, and therefore $\Phi_n(X)$ divides 
$X^{nt}-1$. On the other hand, $X^{nt}-1$ factors as
\[
X^{nt}-1 = (X^{t}-1)(X^{(n-1)t} + X^{(n-2)t} + \dotsb + X^{t} + 1).
\]
Since $\Phi_n(X)$ is coprime to $X^{t}-1$, we obtain that $\Phi_n(X)$ 
divides the polynomial $X^{(n-1)t} + X^{(n-2)t} + \dotsb + X^{t} + 1$. 
\end{proof}

The following proposition shows that under certain conditions, we can omit its cyclotomic divisors $\Phi_n(X)$, $n>1$
from polynomial $P(X)$ in Problem~\ref{p136}.

\begin{proposition}\label{ocd}

Let $\dd \subset \Z$ be non empty set. Suppose that $\dd$ satisfies at least one of the two conditions: 
\[
0\in\dd  \quad \text{ or } \quad \dd = -\dd.
\] If
$P\in\Z[X]$ divides some nonzero polynomial with coefficients from $\dd$,  then for every positive integer $n>1$, 
 the product $P(X)\Phi_n(X)$, where $\Phi_n(X)$ is the $n$-th cyclotomic 
polynomial, also divides some nonzero polynomial with coefficients from $\dd$.
 
In case $\dd = -\dd$, this is also true for $n=1$: $P(X)(X-1)$ has a non-zero multiple with coefficients from $\dd$.
\end{proposition}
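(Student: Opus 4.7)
The plan is to produce, from a given nonzero multiple $Q(X) = c_0 + c_1 X + \dots + c_m X^m$ of $P(X)$ with all $c_j \in \dd$, an explicit multiple $R(X) = Q(X)S(X)$ of $P(X)\Phi_n(X)$ with coefficients in $\dd$. The key observation is that whenever $S(X) = \sum_i s_i X^{it}$ with $t \geq m+1$, the supports of the translates $s_i X^{it} Q(X)$ are pairwise disjoint, so the coefficient sequence of $R(X)$ consists of the numbers $s_i c_j$ (at the positions $it+j$ with $0 \le j \le m$) interspersed with zero coefficients in any gaps between consecutive blocks. The task therefore reduces to selecting $s_i$ and $t$ so that $\Phi_n(X)\mid S(X)$ and all coefficients $s_i c_j$, together with any zeros, belong to $\dd$.

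I would then split into three cases. First, if $n > 1$ and $n \nmid m+1$, I take $t = m+1$ and $s_i = 1$ for $0 \le i \le n-1$, so that $S(X) = 1 + X^{m+1} + \dots + X^{(n-1)(m+1)}$ is divisible by $\Phi_n(X)$ by Lemma~\ref{lm:664}. The blocks abut with no gaps, and every coefficient of $R$ equals some $c_j \in \dd$; this step needs no hypothesis on $\dd$. Second, if $n > 1$, $n \mid m+1$ and $0 \in \dd$, I replace $t = m+1$ by $t = m+2$: since $n \geq 2$ and $n \mid m+1$, one has $n \nmid m+2$, so Lemma~\ref{lm:664} again yields $\Phi_n \mid S$, while the single-position gaps between blocks contribute zero coefficients absorbed by $0 \in \dd$.

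The main obstacle is the remaining case, where $n \mid m+1$ and $0 \notin \dd$: here no gap can be tolerated, yet Lemma~\ref{lm:664} fails at $t = m+1$. The resolution is to invoke the symmetry $\dd = -\dd$ and take $S(X) = 1 - X^{m+1}$, giving $R(X) = Q(X) - X^{m+1}Q(X)$ with coefficient sequence $c_0, c_1, \dots, c_m, -c_0, -c_1, \dots, -c_m$, all in $\dd$ by $\dd = -\dd$. Since $n \mid m+1$, one has $\Phi_n(X) \mid X^{m+1} - 1 \mid S(X)$. This same construction also settles the $n = 1$ assertion, because $\Phi_1(X) = X - 1$ always divides $X^{m+1} - 1$. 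In each case $R(X)$ is nonzero as the product of two nonzero polynomials, and the chain $P \mid Q \mid R$ together with $\Phi_n \mid S \mid R$ yields $P(X)\Phi_n(X) \mid R(X)$, concluding the proof.
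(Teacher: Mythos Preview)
Your argument is correct and follows essentially the same approach as the paper: both proofs multiply the given multiple $Q$ by a sparse polynomial $S(X^t)$ with $t\ge \deg Q+1$, using Lemma~\ref{lm:664} (or the factorization of $X^{m+1}-1$) to ensure $\Phi_n\mid S$, and relying on the disjointness of the shifted copies of $Q$ to keep all coefficients in $\dd$. The only difference is organizational: the paper splits first on the hypothesis ($0\in\dd$ versus $\dd=-\dd$) and in the former case simply picks any $t\ge d+1$ with $n\nmid t$, whereas you split first on whether $n\mid m+1$, which lets you note the pleasant refinement that when $n\nmid m+1$ the construction with $t=m+1$ works without invoking either hypothesis on $\dd$.
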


\begin{proof}
Suppose that there exists 
a nonzero polynomial $R\in\Z[X]$ whose all the coefficients are in $\dd$ and which is 
a multiple of $P$. Let $d$ be the degree of $R$.

Assume that $0\in\dd$ and choose an integer $t\geq d+1$, 
which is not divisible by $n$ (e.g., $t = dn+1$). Then all the coefficients of the 
polynomial 
\[
R(X)(X^{(n-1)t} + X^{(n-2)t} + \dotsb + X^{t} + 1)
\]
lie in $\dd$. Moreover, this polynomial is divisible by the product $P\Phi_n$, 
since $P$ divides $R$ and, by Lemma~\ref{lm:664}, 
$X^{(n-1)t} + X^{(n-2)t} + \dotsb + X^{t} + 1$ is 
a multiple of $\Phi_n(X)$. This completes the proof of the proposition in 
 the case when $0\in\dd$.
 
Assume that $\dd = -\dd$. If $n$ divides $d+1$ then, obviously, $X^n-1$ divides 
$X^{d+1}-1$, and therefore $X^{d+1}-1$ is a multiple of $\Phi_n$. Since $\dd = -\dd$, 
all the coefficients of the polynomial $R(X)(X^{d+1}-1)$ lie in $\dd$ and we are done in 
this case. If $d+1$ is not a multiple of $n$ then, by Lemma~\ref{lm:664}, $\Phi_n(X)$ divides 
the polynomial $X^{(n-1)(d+1)} + X^{(n-2)(d+1)} + \dotsb + X^{d+1} + 1$. Finally, 
note that all the coefficients of the 
polynomial 
\[
R(X)(X^{(n-1)t} + X^{(n-2)t} + \dotsb + X^{t} + 1)
\]
lie in $\dd$ and this polynomial is divisible by the product $P\Phi_n$. 

As for the second part of the Proposition note that if  $\dd = -\dd$ then the polynomial $R(X)(X^{d+1}-1)$ 
is divisible by $P(X)(X-1)$ and all of its coefficients belong to $\dd$. 
\end{proof} 
 
\section{The algorithm}\label{sal}

In this section develop an algorithm to answer Question~\ref{p136}.

\begin{lemma}\label{ineq}
Suppose that $z \in \C$ is a 
root of multiplicity $m \geq 1$ of the polynomial
\[
Q(X) = a_d X^d + a_{d-1} X^{d-1} + \dots +a_1 X + a_0 \in \C[X]
\] of degree $d \geq 1$. Let $j\in\{1,\dotsc, d\}$ and 
\[
R(X) = a_d X^j + a_{d-1} X^{j-1} + \dots + a_{ d-j}.
\]
If $\abs{z} \ne 1$, then, for each $k\in\{ 0,1,\dotsc, m-1\} $, the inequality
\begin{equation}\label{eq:156n}
|R^{(k)}(z)| \leq \frac{k!\cdot H(Q)}{||z|-1|^{k+1}}
\end{equation}
holds. Here $R^{(k)}$ denotes the $k$th derivative of the polynomial $R$, 
$R^{(0)}:=R$, and $H(Q)$ stands for the height of  the polynomial $Q$, namely,
\[
H(Q) = \max\{\abs{a_d}, \abs{a_{d-1}}, \dots, \abs{a_1}, \abs{a_0}\}.
\]

\end{lemma}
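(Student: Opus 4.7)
My plan is to exploit the fact that $R$ is a Horner-type truncation of $Q$. Set $T_0(X) := a_d$ and, for $i = 1,\dotsc, d$,
\[
T_i(X) := a_d X^i + a_{d-1} X^{i-1} + \dotsb + a_{d-i},
\]
so that $T_d = Q$ and $T_j = R$. A direct check gives the recurrence $T_i(X) = X T_{i-1}(X) + a_{d-i}$, and differentiating $k$ times yields
\[
T_i^{(k)}(X) = X\, T_{i-1}^{(k)}(X) + k\, T_{i-1}^{(k-1)}(X)
\]
for $k \ge 1$. I will prove the claimed bound on $|T_i^{(k)}(z)|$ for all $i \in \{0,\dotsc,d\}$ and all $k \in \{0,\dotsc,m-1\}$ by a double induction on $(i,k)$; the direction of the outer induction depends on whether $|z|>1$ or $|z|<1$.

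If $|z|>1$, I induct on $i$ \emph{downward} from $i=d$. The base case is free: $T_d^{(k)}(z) = Q^{(k)}(z) = 0$ for every $k \in \{0,\dotsc,m-1\}$ because $z$ has multiplicity $m$. For the inductive step, solve the differentiated recurrence for $T_{i-1}^{(k)}(z)$, which is legal since $|z|>1$ forces $z\neq 0$, and combine the outer hypothesis on $T_i^{(k)}(z)$ with a nested upward induction on $k$ bounding $T_{i-1}^{(k-1)}(z)$. The telescoping identity
\[
\frac{1}{(|z|-1)^{k+1}} + \frac{1}{(|z|-1)^k} = \frac{|z|}{(|z|-1)^{k+1}}
\]
exactly absorbs the extra factor $1/|z|$ created by dividing the recurrence by $z$, and the induction closes.

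If $|z|<1$, the outer induction runs \emph{upward} from $i=0$, and no root hypothesis is required. The base is trivial since $T_0 = a_d$ is constant: $T_0^{(k)}(z) = 0$ for $k \ge 1$, and $|T_0(z)| = |a_d| \le H(Q) \le H(Q)/(1-|z|)$. The inductive step applies the recurrence directly (no division is needed), and the analogous identity
\[
\frac{|z|}{(1-|z|)^{k+1}} + \frac{1}{(1-|z|)^k} = \frac{1}{(1-|z|)^{k+1}}
\]
closes the nested induction.

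The main obstacle is purely organizational: one must set up the two nested inductions so that the correct previously established bounds are available at every step, especially in the $|z|>1$ case, where the multiplicity hypothesis enters only through the base of the outer induction. It is worth noting that in the $|z|<1$ case the inequality actually holds for every $k$ without any reference to $z$ being a root, so the asymmetric role of the multiplicity condition is intrinsic to the $|z|>1$ side of the argument.
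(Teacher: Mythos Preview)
Your argument is correct and is a genuinely different route from the paper's. The paper treats the two cases by direct computation: for $|z|>1$ it writes $Q(X)=T(X)(X-z)^m$, expresses $R(X)$ as the tail $-a_{d-j-1}/X-\dotsb-a_0/X^{d-j}$ plus a term that vanishes to order $m$ at $z$, differentiates, and bounds term by term by the derivatives of the geometric series $1/(X-1)$; for $|z|<1$ it simply differentiates the polynomial $R$ and bounds the resulting finite sum by the derivatives of $1/(1-X)$. Your approach instead builds everything from the Horner recurrence $T_i=XT_{i-1}+a_{d-i}$ and its differentiated form $T_i^{(k)}=XT_{i-1}^{(k)}+kT_{i-1}^{(k-1)}$, running a downward outer induction from $T_d=Q$ when $|z|>1$ (so the multiplicity hypothesis enters exactly at the base) and an upward one from $T_0=a_d$ when $|z|<1$. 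The algebraic identities you quote are precisely what make the step close in each case. What you gain is a more uniform and series-free treatment of the two regimes; what the paper's version gains is an explicit closed-form expression for $R^{(k)}(z)$ before estimating, which makes the source of each constant transparent. Your remark that the $|z|<1$ bound holds for all $k$ and does not use the root hypothesis matches the paper's handling of that case.
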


\begin{proof}
First, assume that $|z| > 1$. Since $z$ is a root of $Q(X)$ of multiplicity $m$, 
there exists a polynomial $T(X)\in\C[X]$ such that 
\[
a_d X^d + a_{d-1} X^{d-1} + \dots +a_1 X + a_0 =  T(X) \cdot (X - z)^m.
\]
One has 
\[
X^{d-j}\big( a_d X^j + a_{d-1} X^{j-1} + \dots + a_{ d-j}\big) + a_{ d-j-1}X^{d-j-1} + \dotsb + 
a_0 = T(X) \cdot (X - z)^m,
\]
and so  
\[
R(X) = a_d X^j + a_{d-1} X^{j-1} + \dots + a_{ d-j}  
= -\frac{a_{ d-j-1}}{X} - \dotsb - \frac{a_0}{X^{d-j}} + 
\frac{T(X) \cdot (X - z)^m}{X^{d-j}}.
\]

Now fix $k\in\{ 0,1,\dotsc, m-1\}$. One can easily see that the $k$th derivative of 
the rational function $T(X) \cdot (X - z)^m/X^{d-j}\in\C(X)$ vanishes at $X = z$. Therefore 
\[
R^{(k)}(z) = \left.\Big(  -\frac{a_{ d-j-1}}{X} - \dotsb - \frac{a_0}{X^{d-j}} 
\Big)^{(k)}\right|_{X=z}
\]
\vspace{0,2cm}
\[
= (-1)^{k+1}\frac{k! \,a_{ d-j-1}}{z^{k+1}} + (-1)^{k+1}\frac{(k+1)!\, 
a_{ d-j-2}}{1!\, z^{k+2}} + \dotsb 
+ (-1)^{k+1}\frac{(d+k-j-1)! \,a_{ 0}}{(d-j-1)!\, z^{d+k-j}}.
\]
From this we obtain
\begin{align}
|R^{(k)}(z)| &\leq H(Q) \Big( 
\frac{k!}{|z|^{k+1}} +\frac{(k+1)!}{1!\, |z|^{k+2}} + 
\dotsb + \frac{(d+k-j-1)!}{(d-j-1)!\, |z|^{d+k-j}}\Big)\nonumber\\[0,3cm]
&\leq H(Q) \Big(\frac{k!}{|z|^{k+1}} +\frac{(k+1)!}{1!\, |z|^{k+2}} + 
\dotsb + \frac{(d+k-j-1)!}{(d-j-1)!\, |z|^{d+k-j}} + \dotsb\Big)\nonumber\\[0,3cm]
&= H(Q) \left.(-1)^k \Big(\frac{1}{X} + \frac{1}{X^2} + \dotsb \Big)^{(k)} \right|_{X=|z|}\nonumber\\[0,3cm]
&= H(Q) \left.(-1)^k \Big(\frac{1}{X-1} \Big)^{(k)} \right|_{X=|z|} = 
\frac{k! H(Q)}{(|z|-1)^{k+1}}.\label{eq:226e} 
\end{align}

Now assume that $|z|<1$. If $k>j = \deg R$ then $R^{(k)}(X)\equiv 0$ and the inequality \eqref{eq:156n} 
obviously holds. Hence assume that $k\leq j$. Then 
\[
R^{(k)}(z) = \left.\Big(   a_d X^j + a_{d-1} X^{j-1} + \dots + a_{ d-j} 
\Big)^{(k)}\right|_{X=z} =
\]
\[
= \frac{j!}{(j-k)!} a_d z^{j-k} + \dotsb + \frac{(k+1)!}{1!}a_{d-j+k+1}z + k! a_{d-j+k},
\]
and therefore 
\begin{align}
|R^{(k)}(z)| &\leq H(Q) \Big( \frac{j!}{(j-k)!} |z|^{j-k} + \dotsb + \frac{(k+1)!}{1!}|z| + k!\Big)\nonumber\\[0,3cm]
&\leq H(Q) \Big(k! + \frac{(k+1)!}{1!}|z| + \dotsb + \frac{j!}{(j-k)!} |z|^{j-k} + \dotsb \Big)\nonumber\\[0,3cm]
&= H(Q) \left. \Big( 1 + X + X^2 + \dotsb \Big)^{(k)} \right|_{X=|z|}\nonumber\\[0,3cm]
&= H(Q) \left. \Big(\frac{1}{1-X}\Big)^{(k)} \right|_{X=|z|} = \frac{k! H(Q)}{(1-|z|)^{k+1}}.\label{eq:242f}
\end{align}
From \eqref{eq:226e} and \eqref{eq:242f} we obtain
\[
|R^{(k)}(z)| \leq \frac{k!\cdot H(Q)}{||z|-1|^{k+1}}.
\]
\end{proof}

Let $P \in \Z[X]$ be a monic polynomial (that is, the leading coefficient of $P$ is equal to $1$). Then one can divide any integer polynomial $Q$ by $P$ in $\Z[X]$: there exist unique integer quotient and remainder polynomials $S$ and $R$, $\deg{R} < \deg{P}$, such that $Q = P \cdot S + R.$ The first key observation: polynomials $S$ and $R$ have integer coefficients, provided that $P$ is monic. The second key observation is as follows. For any complex number $z$ which satisfies $P(z) = 0$, one has $Q(z) = R(z)$. This means that the values of the polynomial $Q$ evaluated at any complex root of the divisor polynomial $P$ coincide with the values of the remainder polynomial $R$ evaluated at same points.

The reduction map $Q \to Q \pmod{P}$ is a homomorphism of rings which maps the ring $\Z[X]$ to the quotient ring $\Z[X]/(P)$. The remainder polynomial $R$ is a representative integer polynomial for the class in $\Z[X] / (P)$ to which $Q$ belongs.

\begin{definition}\label{rem} Let $P(X)$ be a nonconstant polynomial with integer coefficients with no roots on the complex unit circle $|z|=1$. Suppose that the  
factorization of $P$ in $\C[X]$ is 
\[
P(X) = a\cdot (X - \al_1)^{e_1}(X - \al_2)^{e_2}\dotsb (X - \al_s)^{e_s},
\]
where $\al_1, \al_2, \dotsc, \al_s$ are distinct complex numbers and $e_j\geq 1$ for 
$j=1,2,\dotsc, s$. Let $B \in \R$ be arbitrary positive number. Define $\Rem(P, B)$ to 
be the set of all polynomials $R \in \Z[X]$, $\deg R < \deg P$, which, for each 
$j\in\{ 1,2,\dotsc, s \}$, satisfy the  
inequalities
\begin{eqnarray}\label{eq:237con}
|R(\al_j)| \leq \frac{B}{||\al_j|-1|},\;\,\nonumber\\[0,2cm]
|R'(\al_j)| \leq \frac{1!B}{||\al_j|-1|^2},\\
\dotsb\qquad\qquad\;\;\nonumber\\
|R^{(e_j-1)}(\al_j)| \leq \frac{(e_j-1)!B}{||\al_j|-1|^{e_j}}.\nonumber
\end{eqnarray}
Here 
$R^{(j)}$ denotes the $j$th derivative of the polynomial $R$, and $R^{(0)}:=R$.
\end{definition}

The next lemma is of great importance.

\begin{lemma}\label{fin}
 Let $P \in \Z[X]$ and $B \in \R$ be as in Definition~\ref{rem}.  
Then $\Rem(P, B)$ is a finite set.
\end{lemma}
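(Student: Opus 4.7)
The plan is to show that the coefficients of any $R \in \Rem(P,B)$ are bounded in absolute value by a constant depending only on $P$ and $B$; finiteness then follows immediately from the fact that $R \in \Z[X]$, $\deg R < \deg P$.

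Let $d=\deg P$ and write $R(X)=r_{d-1}X^{d-1}+\dotsb+r_1X+r_0$. Consider the evaluation map
\[
\Phi:\C^{d}\longrightarrow \C^{d},\qquad (r_0,r_1,\dotsc,r_{d-1})\longmapsto \bigl(R^{(k)}(\al_j)\bigr)_{1\leq j\leq s,\,0\leq k\leq e_j-1},
\]
where the codomain is ordered in any fixed way and has dimension $\sum_{j=1}^{s}e_j=d$. This is exactly the Hermite interpolation data of $R$ at the roots of $P$ (counted with multiplicity). Since the $\al_j$ are pairwise distinct and the total number of interpolation conditions equals $d$, standard Hermite interpolation guarantees that for any prescribed data in $\C^d$ there is a unique polynomial of degree less than $d$ realizing it; in other words, $\Phi$ is a linear isomorphism of $\C^d$.

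Now the inequalities \eqref{eq:237con} say precisely that $\Phi(r_0,\dotsc,r_{d-1})$ lies in a bounded subset of $\C^{d}$ (indeed, in a product of closed discs whose radii depend only on $B$ and on $||\al_j|-1|$, which are strictly positive by the hypothesis that $P$ has no roots on $|z|=1$). Applying the continuous linear inverse $\Phi^{-1}$, the coefficient vector $(r_0,\dotsc,r_{d-1})$ lies in a bounded subset of $\C^{d}$. Hence there exists a constant $C=C(P,B)$ such that $|r_i|\leq C$ for every $i$ and every $R\in\Rem(P,B)$. The intersection of a bounded set in $\R^{d}$ with the lattice $\Z^{d}$ is finite, which proves the lemma.

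The only genuine ingredient here is the non-vanishing of $||\al_j|-1|$, which ensures that the bounds in \eqref{eq:237con} are finite and is the reason the hypothesis ``$P$ has no roots on the unit circle'' cannot be dropped; the remaining step is the routine observation that Hermite interpolation at $d$ conditions gives an isomorphism on the space of polynomials of degree $<d$.
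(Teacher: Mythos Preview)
Your proof is correct and follows essentially the same approach as the paper. Both arguments show that the linear map taking the coefficient vector $(r_0,\dotsc,r_{d-1})$ to the Hermite data $\bigl(R^{(k)}(\al_j)\bigr)$ is an isomorphism and then pull back the boundedness of the data to obtain boundedness of the integer coefficients; the paper simply makes this explicit by writing down the confluent Vandermonde matrix, citing its determinant formula $\prod_{i<j}(\al_j-\al_i)^{e_ie_j}\prod_k(e_k-1)!!$, and applying Cramer's rule to produce concrete upper bounds for the $|r_j|$, whereas you invoke Hermite interpolation abstractly.
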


\begin{proof}
Let $P(X)$ be a polynomial of degree $d\geq 1$ with integer coefficients, whose 
factorization in $\C[X]$ is 
\[
P(X) = a\cdot (X - \al_1)^{e_1}(X - \al_2)^{e_2}\dotsb (X - \al_s)^{e_s},
\]
where $\al_1, \al_2, \dotsc, \al_s$ are distinct complex numbers and $e_j\geq 1$ for 
$j=1,2,\dotsc, s$. 

Write
\[
R(X) = r_{d-1} X^{d-1} + \dots + r_1 X + r_0,
\] 
where $r_j,$ $0 \leq j \leq d-1$ are unknown integers. Consider the system of linear equations in variables $r_j$:
\begin{equation}\label{eq:280system}
\begin{split}
r_0+  r_1 \al_j + \phantom{2}r_2 \al_j^2 +\phantom{222222}\dots\phantom{222222}   + \phantom{(d-1)}r_{d-1} \al_j^{d-1}\, &= R(\al_j), \\
         r_1\phantom{\al_j}  +  2r_2 \al_j + \phantom{222222}\dots\phantom{222222} + (d-1)r_{d-1} \al_j^{d-2}\, &= R'(\al_j),\\
                \dotsb \quad \dotsb \quad\quad \dotsb \qquad\qquad\quad\;\,\, &\\
 (e_j-1)!r_{e_j-1} + \dots +\,\,\, {\scriptstyle \frac{(d-1)!}{(d-e_j)!}}r_{d-1}\al_j^{d-e_j} &= R^{(e_j-1)}(\al_j)
\end{split}
\end{equation}
for $j=1, 2, \dots, s$. Write it in the matrix form $A\mathbf{x} = \mathbf{y}$, where
\[
\quad \mathbf{x} = \begin{pmatrix}
r_0\\
r_1\\
\dots\\
r_{d-1}
\end{pmatrix},
\quad \mathbf{y} = \begin{pmatrix}
R(\al_1)\\
R'(\al_1)\\
\dots\\
R^{(e_s-1)}(\al_s)\\
\end{pmatrix}
\]
and the system matrix $A$ is the confluent Vandermond matrix  
which consists of row-blocks ($j =1,2,\dotsc,s$)
\[
\begin{matrix}
1 & \al_j & \al_j^2 & \dotsb & & & \phantom{(d-1)(d-2)}\al_j^{d-1} \\
0 & 1 & 2\al_j & \dotsb & & & \phantom{(d-2)}(d-1)\al_j^{d-2} \\
0 & 0 & 2 & \dotsb & & & (d-1)(d-2)\al_j^{d-3} \\
 &   &  \dotsb     & \dotsb & &  & \dotsb  \\
  0 & 0  &  \dotsb         & (e_j-1)! & e_j!\al_j & \dotsb &  \phantom{(d-2)1}{ \frac{(d-1)!}{(d-e_j)!}}\al_j^{d-e_j}
\end{matrix}
\]
(each row in this block, except for the first one, is the derivative in $\al_j$ of the previous row).
Denote by $D(\al_1^{e_1}\al_2^{e_2}\dotsc \al_s^{e_s})$ the determinant of the confluent Vandermond matrix $A$. 
It is well-known (see, for instance, \cite[Chapter VI]{ait}, \cite[Chapter 6]{horn}, \cite{kal} and 
\cite{mer}) that 
\[
D(\al_1^{e_1}\al_2^{e_2}\dotsc \al_s^{e_s}) = \prod_{i<j} (\al_j - \al_i)^{e_i e_j} \prod_{k=1}^s (e_k - 1)!!,
\]
where $n!!$ stands for the product $n!(n-1)! \dotsb 2! \,1!$. In particular, 
\[
\det(A) = D(\al_1^{e_1}\al_2^{e_2}\dotsc \al_s^{e_s})\neq 0,
\] 
since $\al_1, \al_2, \dotsc, \al_s$ are distinct complex numbers. So the inverse matrix 
$A^{-1}$ exists and $\mathbf{x} = A^{-1} \mathbf{y}.$  By Cramer's formula,
\[
r_j = \frac{1}{\det(A)} \left( R(\al_1) A_{1j+1}+ \dots + 
R^{(e_1-1)}(\al_1) A_{e_1j+1} + \dots   + R^{(e_s-1)}(\al_s) A_{d\, j+1}\right),
\] 
for $j=0,1,\dotsc, d-1$, where $A_{kl}$, $1 \leq k, l \leq d$ are the cofactors of the matrix $A$. 

Now, let $B \in \R$ be arbitrary positive number and assume that $R\in\Rem(P, B)$. 
Then in view of \eqref{eq:237con} we have
\[
|r_j| \leq \frac{B}{|\det(A)|} \left( \frac{|A_{1j+1}|}{||\al_1|-1|}+ \dots + 
\frac{(e_1-1)!|A_{e_1j+1}|}{||\al_1|-1|^{e_1}} + \dots   + \frac{(e_s-1)!|A_{d\, j+1}|}{||\al_s|-1|^{e_s}}\right),
\]
for $j=0,1,\dotsc, d-1$. Therefore the number of solutions $\mathbf{x}\in\Z^{d}$ to 
 \eqref{eq:280system} is finite, i.e., the set $\Rem(P, B)$ is finite. 
\end{proof}

We now define a certain graph which is associated to the set of remainder polynomials, which are bounded at the roots of the polynomial $P$ and the digit set $\dd$.

\begin{definition}
Let $\G=\G(P, \dd)$ be a directed graph whose vertices represent all the distinct polynomials 
$R \in \Rem(P, B)\cup \dd$, where $B=\max\{|b|: b\in \dd\}$.  
We connect the vertices which represent two remainder polynomials $R_i$ and $R_j$, by an edge which points from $R_i$ to $R_j$, if
\[
R_j \equiv X \cdot R_i + b \pmod{P}
\] in $\Z[X]/(P)$ for some digit $b \in \dd$.
\end{definition}
Here is the main theorem of this section.
\begin{theorem}\label{main}
Let $P \in \Z[X]$ be a monic polynomial with no roots on the complex unit circle $|z|=1$. Then $P$ divides an integer polynomial
\[
Q(X) = a_n X^n + a_{n-1} X^{n-1} + \dots  + a_1 X + a_0 \in \C[X]
\] with all the coefficients $a_j \in \dd$ and the leading coefficient $a_n \in \dd$, if and only if the graph $\G = \G(P, \dd)$ contains a path which starts at the remainder polynomial $R(X) = a_n$ and ends at $R(X) = 0$. The length of the path is $n$, where $n$ is the degree of $Q$.
\end{theorem}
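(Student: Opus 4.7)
The plan is to set up a correspondence between polynomial multiples $Q$ of $P$ with coefficients in $\dd$ and paths in $\G(P, \dd)$ from $a_n$ to $0$. The bridge is the sequence of remainders $R_0, R_1, \dotsc, R_n$ obtained by reading the coefficients of $Q$ from the top and iteratively reducing modulo $P$: set $R_0 := a_n$ and $R_k := (X R_{k-1} + a_{n-k}) \bmod P$ for $1 \leq k \leq n$, where the reduction is carried out in $\Z[X]$, which is legitimate because $P$ is monic. A straightforward induction on $k$ yields
\[
R_k(X) \equiv a_n X^k + a_{n-1} X^{k-1} + \dotsb + a_{n-k} \pmod{P},
\]
so $R_n \equiv Q \pmod P$, and consecutive $R_{k-1}, R_k$ are linked by the edge relation defining $\G$.

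For the converse direction, given a path $R_0 \to R_1 \to \dotsb \to R_n$ in $\G$ with $R_0 = a_n$ and $R_n = 0$, the edge labels $a_{n-1}, \dotsc, a_0 \in \dd$ combined with $a_n$ define $Q(X) = \sum_{j=0}^{n} a_j X^j$. The above congruence forces $0 = R_n \equiv Q \pmod P$, i.e., $P \mid Q$, and $Q$ has all coefficients in $\dd$ by construction.

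For the forward direction, assume $P \mid Q$ with $Q = \sum_{j=0}^n a_j X^j$, $a_j \in \dd$, and construct $R_k$ as above. Then $R_n \equiv Q \equiv 0 \pmod P$ gives $R_n = 0$, the initial $R_0 = a_n \in \dd$ is a vertex of $\G$, and consecutive transitions follow the edge rule. What remains, and is the core of the proof, is to show that each $R_k$ with $k \geq 1$ is itself a vertex of $\G$, i.e., $R_k \in \Rem(P, B)$.

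The main obstacle is this last verification, which is handled by Lemma~\ref{ineq}. Factor $P(X) = a \prod_{j=1}^s (X - \al_j)^{e_j}$; since $P \mid Q$, each $\al_j$ is a root of $Q$ of multiplicity at least $e_j$. Introduce the top-truncation
\[
\tilde R_k(X) = a_n X^k + a_{n-1} X^{k-1} + \dotsb + a_{n-k},
\]
which satisfies $R_k \equiv \tilde R_k \pmod P$ by the induction above, so $R_k - \tilde R_k \in P \cdot \Z[X]$ and hence the derivatives of $R_k$ and $\tilde R_k$ of orders $0, 1, \dotsc, e_j - 1$ coincide at $\al_j$. Applying Lemma~\ref{ineq} to $Q$ (with the lemma's parameter $j$ replaced by our $k$ and multiplicity $m \geq e_j$) gives, for each $l \in \{0, 1, \dotsc, e_j - 1\}$,
\[
|R_k^{(l)}(\al_j)| = |\tilde R_k^{(l)}(\al_j)| \leq \frac{l!\, H(Q)}{||\al_j|-1|^{l+1}} \leq \frac{l!\, B}{||\al_j|-1|^{l+1}},
\]
where $H(Q) \leq B$ since every coefficient of $Q$ lies in $\dd$. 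These are precisely the inequalities \eqref{eq:237con}, so $R_k \in \Rem(P, B)$, completing the argument.
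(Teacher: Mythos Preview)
Your proof is correct and follows essentially the same approach as the paper's: both directions set up the sequence of partial polynomials $\tilde R_k = a_n X^k + \dotsb + a_{n-k}$ (which the paper calls $Q_k$), reduce them modulo $P$ to obtain the $R_k$, and use Lemma~\ref{ineq} together with the fact that $R_k - \tilde R_k \in P\cdot\Z[X]$ to verify the derivative bounds \eqref{eq:237con} and hence membership in $\Rem(P,B)$. The organization and level of detail match the paper's proof closely.
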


\begin{proof}
Let us first prove the necessity. Assume that $P$ divides $Q$, that is, $Q(X) \equiv 0 \pmod{P}$. Define the polynomials
\begin{align*}
Q_0(X) &= a_n,\\
Q_1(X) &= a_n X + a_{n-1},\\
Q_2(X) &= a_n X^2 + a_{n-1}X + a_{n-2},\\
            &\dots& \\
Q_n(X) &=a_n X^n + a_{n-1}X^{n-1} + \dots + a_1 X + a_0.
\end{align*} 
Let $R_j$ be the remainder of $Q_j$ modulo $P$.   
Suppose that the factorization of $P$ in $\C[X]$ is 
\[
P(X) = a\cdot (X - \al_1)^{e_1}(X - \al_2)^{e_2}\dotsb (X - \al_s)^{e_s},
\]
where $\al_1, \al_2, \dotsc, \al_s$ are distinct complex numbers and $e_j\geq 1$ for 
$j=1,2,\dotsc, s$. By Lemma \ref{ineq}, each polynomial $Q_i$, $i=1,2,\dots, n$, satisfies 
the inequalities 
\begin{eqnarray}\label{eq:548cons}
|Q_i(\al_j)| \leq \frac{H(Q)}{||\al_j|-1|},\;\,\nonumber\qquad\\
|Q_i'(\al_j)| \leq \frac{1!H(Q)}{||\al_j|-1|^2},\qquad\\
\dotsb\qquad\qquad\;\;\qquad\nonumber\,\\
|Q_i^{(e_j-1)}(\al_j)| \leq \frac{(e_j-1)!H(Q)}{||\al_j|-1|^{e_j}},\nonumber\,
\end{eqnarray}
for $j=1,2,\dotsc, s$. Moreover, for each $j\in\{1,2,\dotsc, s\}$ and each $i\in\{ 1,2,\dotsc, n\}$ 
\[
R_i^{(k)}(\al_j) = Q_i^{(k)}(\al_j),\;\;0\leq k \leq e_j-1,
\]
since $R_i \equiv Q_i \pmod{P}$. Therefore, in view of \eqref{eq:548cons},  
$R_j$ all belong to the set $\Rem(P, B)$, where $B = H(Q)$.   
Reducing the equality \[Q_{j} = X \cdot Q_{j-1} + a_{n-j}\] with 
$a_{n-j} \in \dd$ modulo $P$ in $\Z[X]/(P)$ yields 
\[
R_{j} \equiv X \cdot R_{j-1} + a_{n-j} \pmod{P}.
\] 
Hence, there exists an edge in the graph $\G$ which connects 
$R_{j-1}$ to $R_j$. Since $Q_n(X) = Q(X) \equiv 0 \pmod{P}$, 
one has $R_n = 0$. Consequently, there exists a path in $\G$ which 
joins $R_0 = a_n$ to the reminder polynomial $R_n = 0$.  

Conversely, assume that there exists a path of length $n$ which connects the $n+1$ vertices $R_0, R_1$, $\dots$, $R_n$ with $R_0 = a_n$ and $R_n = 0$. By the definition of the graph $\G$, there exist coefficients $a_j \in \dd$, $j = 1, \dots, n$, such that
\[R_j \equiv X \cdot R_{j-1} + a_{n-j} \pmod{P}.\] Recursively define the polynomials $Q_0 := R_0 = a_n$, $Q_j := X \cdot Q_{j-1} + a_{n-j}$ for $j=1$, $2$, $\dots$, $n$. By the definition, $Q_j \equiv R_j \pmod{P}$. Then the polynomial
\[
Q(X) := Q_n(X) = a_n X^n + a_{n-1}X^{n-1} + \dotsb + a_1 X+ a_0
\] has all the coefficients $a_j \in \dd$, and $Q_n(X)$ is divisible by $P$ in $\Z[X]$.
\end{proof}

According to Lemma \ref{fin}, the graph $\G = \G(P, \dd)$ is finite. Thus, the polynomial $Q$ with the coefficients in the set $\dd$ may be found by running any path finding algorithm on $\G$. For performance reasons, depth-first search was used.

\newpage

\hrule
\begin{algorithm}\label{al1}{Determines whether $P \in \Z[X]$ has a multiple $Q \in \dd[X]$ with the leading coefficient $a \in \dd$}.
\begin{tabbing}\label{bfs}
Input: \ \ \ \  \= a monic polynomial $P \in \Z[X]$,\\
		    \> the digit set $\dd \subset \Z$,\\
		    \>  the leading coefficient $a \in \dd$, $a \ne 0$.\\
Output:         \> a polynomial $Q \in \dd[X]$ or $\varnothing$, if such $Q$ does not exist\\ 
Variables: \>the set $\V$ of visited vertices of the directed graph $\G=\G(P, \dd)$,\\
                \>the set $\E$ of edges that join vertices of $\V$,\\
		\>found - boolean variable indicating if the search is finished.\\
Method:    \>Depth-first search using Theorem \ref{main}.
\end{tabbing}
\end{algorithm}
\hrule
\begin{tabbing}
 Step \=$0$: set $\V = \varnothing$, $\E = \varnothing$\\
 Step \>$1$: add the polynomial $R=a$ into $\V$\\
 Step \>$2$: set \emph{found} := False\\
 Step \>$3$: call \textbf{do\_search}($a$, \emph{found})\\
Step \>$4$: if $found$ then print $a$\\
	\>  \ \ \ \ else print $\varnothing$\\
	\> \ \ \ end if \\
Step \>$5$:  stop.\\ \\

 procedure \textbf{do\_search}(local var $R \in \Z[X]$, var \emph{found}):\\
	\> local var $S \in \Z[X]$\\
 	\> if $R = 0$ then\\
	\> \ \ \ \ \ \ \= set \emph{found} := True\\
	\> else  \\
	\>             \> for each $d \in \dd$ do\\
	\>	 	\> \ \ \ \ \ \=compute $S := X \cdot R + d \pmod{P}$.\\
        \>             \>            \>if $S \notin \V$ and  $S\in\Rem(P, B)$, where $B:=\max\{|d|: d\in \dd\}$  then\\
        \>             \>            \>\ \ \ \ \ \= add $S$ to $\V$\\
        \>             \>            \>           \>add $d$ as an edge from $R$ to $S$ to $\E$\\
	\>             \>            \>           \> call  \textbf{do\_search}($S$, \emph{found})\\
        \>             \>            \> end if\\
	\>              \>           \>if $found$ then\\
	\>              \>           \>           \> print digit $d$\\
	\>              \>           \>           \> break loop\\
        \>             \>            \> end if\\
        \>             \>end do\\
        \>end if\\
end proc
\end{tabbing}
\hrule
\vspace{0,5cm}
\textbf{Note.} If a polynomial $P(X)\in\Z[X]$ has unimodular roots we can exclude them from \eqref{eq:237con} 
and try to build the graph $\G(P, \dd)$. If the resulting graph is finite then we can still answer 
Question~\ref{p136} for such polynomials.

\end{document}